\date{}
\title{Sufficient conditions for the $n$-dimensional real Jacobian conjecture}
\author{Changjian Liu$^{\text{a}}$ and Yuzhou Tian$^{\text{b},}$\footnote{Corresponding author: Yuzhou Tian.}\\
	\it\footnotesize School of Mathematics (Zhuhai), Sun Yat-sen University, Zhuhai  519082, China$^{\text{a}}$,\\
	\it\footnotesize E-mail: liuchangj@mail.sysu.edu.cn\\
	\it\footnotesize Department of Mathematics, Jinan University, Guangzhou 510632, China$^{\text{b}}$\\
	\it\footnotesize E-mail: tianyuzhou2016@163.com}
\newtheorem {theorem*}{Theorem}
\newtheorem {theorem} {Theorem}
\newtheorem{thm*}{Theorem}
\newtheorem{thm}{Theorem}
\newtheorem{example}{Example}
\newtheorem{proposition}{Proposition}
\newtheorem{lemma}{Lemma}
\newtheorem{corollary}{Corollary}
\newtheorem{remark}{Remark}
\newtheorem {open problem} {Open problem}
\newtheorem {problem} {Problem}
\numberwithin{equation}{section}
\begin{document}
\maketitle
\noindent {\bf Abstract} The real Jacobian conjecture was posed by Randall in 1983. This conjecture asserts that if $F=\left(f_1,\ldots ,f_n\right):\mathbb{R}^n\rightarrow\mathbb{R}^n$ is a polynomial map  such that  $\det DF\left(\mathbf{x}\right)\neq0$ for all $\mathbf{x}\in\mathbb{R}^n$, then $F$ is injective.

This investigation mainly consists of two parts.  Firstly, we use the qualitative theory of dynamical systems  to give an alternate proof of the polynomial version of the $n$-dimensional Hadamard's theorem. Secondly, we  present some algebraic sufficient conditions for the $n$-dimensional real Jacobian conjecture.  Our results not only extend the main result of [J. Differential Equations {\bf 260} (2016), 5250-5258] to quasi-homogeneous type, but also  generalize it from $\mathbb{R}^2$ to $\mathbb{R}^n$. As a coproduct of our proof process, we solve an open problem formulated
by Braun, Gin\'{e} and Llibre in [J. Differential Equations {\bf 260} (2016), 5250-5258].

\smallskip

\noindent {\bf 2020 Math Subject Classification: } Primary 34C05.  Secondary 34D45.  Tertiary 14R15

\smallskip

\noindent {\bf Key words and phrases:} {Real Jacobian conjecture; Global injectivity; Quasi-homogeneous}

\section{Introduction and main results}\label{se1}
The classical inverse function theorem  tells us that the following  smooth map
\begin{align}\label{eq48}
&F=\left(f_1,\ldots,f_n\right):\mathbb{R}^n\rightarrow\mathbb{R}^n
\end{align}
is a local diffeomorphism with $\det DF\left(\mathbf{x}\right)\neq0$ for all $\mathbf{x}=\left(x_1,\ldots,x_n\right)\in\mathbb{R}^n$, but it is not necessarily globally injective in $\mathbb{R}^n$. This creates an interesting problem to look for some conditions to ensure that $F$ is a globally injective in $\mathbb{R}^n$.  Many excellent  works have been dedicated to this
 topic, such as \cite{MR3319979,MR305418,MR1940235,MR2096702,MR694845}.  The polynomial map $F$ as specific smooth map has attracted the attention of numerous scholars.  For polynomial map, the following two conjectures in algebraic geometry are well-known.

In 1939, Keller \cite{MR1550818} investigated the polynomial map $F$ in \emph{complex space $\mathbb{C}^n$}  and posed a conjecture:  if $F=\left(f_1,\ldots ,f_n\right):\mathbb{C}^n\rightarrow\mathbb{C}^n$ is a polynomial map  such that  $\det DF\left(\mathbf{x}\right)$ is a \emph{non-zero constant} for all  $\mathbf{x}\in\mathbb{C}^n$, then $F$ is injective. This is the famous \emph{Jacobian conjecture}.  In spite of the multiple attempts to prove for many mathematicians, Jacobian conjecture  is still open even for two-dimensional ($n=2$).  In 1998, the Jacobian conjecture was picked by Smale \cite{MR1631413} as one of the 18 great mathematical problems for the 21th century. The recent progress of Jacobian conjecture  can be found in the books \cite{MR1790619,MR4242820} and survey \cite{MR663785}, etc.

In 1983, Randall \cite{MR713265}  turned his attention to the polynomial map $F$ in \emph{Euclidean space}  $\mathbb{R}^n$ and stated the following conjecture: if $F=\left(f_1,\ldots ,f_n\right):\mathbb{R}^n\rightarrow\mathbb{R}^n$ is a polynomial map  such that  $\det DF\left(\mathbf{x}\right)\neq0$ for all $\mathbf{x}\in\mathbb{R}^n$, then $F$ is injective.  This conjecture was named \emph{real Jacobian conjecture}. There have profound relations between the Jacobian conjecture and real Jacobian conjecture: if $F=\left(f_1,\ldots ,f_n\right):\mathbb{C}^n\rightarrow\mathbb{C}^n$ is a polynomial map  such that  $\det DF\left(\mathbf{x}\right)$ is a non-zero constant for all  $\mathbf{x}\in\mathbb{C}^n$, then $\widetilde{F}=\left(\text{Re}f_1,\text{Im}f_1,\ldots,\text{Re} f_n,\text{Im}f_n\right):\mathbb{R}^{2n}\rightarrow \mathbb{R}^{2n}$ is a polynomial map and $\det D\widetilde{F}\left(\mathbf{x}\right)=\big|\det DF\left(\mathbf{x}\right)\big|^2$ is  a non-zero constant, see \cite{MR1790619}. This implies that \emph{if the $2n$-dimensional real Jacobian conjecture is correct, then
the $n$-dimensional Jacobian conjecture holds.} Regrettably, Pinchuk \cite{MR1292168} found  a counterexample with nonvanishing Jacobian determinant (but not a non-zero constant) to the two-dimensional  real Jacobian conjecture. Nevertheless, Pinchuck's  counterexample did not lead
to the mathematicians lost interest in the real Jacobian conjecture.  The mathematical community proposed a fascinating problem, that is, to search additional conditions such that the real Jacobian conjecture is true.  The exploration of this problem has enriched results about polynomial automorphisms  and provides  lots of information available on the Jacobian conjecture.

From now on, we focus on the real Jacobian conjecture. The two-dimensional  real Jacobian conjecture (i.e., $F=\left(f,g\right)$) has been studied
 extensively, and obtained plentiful results. Gwo\'{z}dziewicz proved that if $\text{deg}\;f\leq3$ and $\text{deg}\;g\leq3$, then the two-dimensional  real Jacobian conjecture is correct, see \cite{MR1839866}.  It is shown in \cite{MR2552779,MR3514314}  that the two-dimensional  real Jacobian conjecture holds if $\text{deg}\;f\leq4$, independent of $\text{deg}\;g$. Sabatini in \cite{MR1636592} constructed a global dynamical condition, which allows us to investigate the two-dimensional  real Jacobian conjecture via some dynamical systems  tools. After Sabatini's work, a stream of sufficient conditions are proved. For example,  Braun et al. \cite{MR3448779}  provided a nice sufficient condition as follows.
 \begin{thm}\label{th7}{\rm (see \cite{MR3448779})}
 	Assume that the polynomial map $F=\left(f,g\right):\mathbb{R}^2\rightarrow \mathbb{R}^2$ satisfies  $F\left(0,0\right)=\left(0,0\right)$ and $\det DF\left(x,y\right)\neq0$ for all $\left(x,y\right)\in\mathbb{R}^2$. If the higher homogeneous terms of the polynomials  $ff_x+gg_x$ and $ff_y+gg_y$ do not have real linear factors in common, then $F$ is injective.
 \end{thm}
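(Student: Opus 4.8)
The plan is to deduce injectivity from properness of $F$ and then invoke the polynomial version of Hadamard's theorem established in the first part of the paper, which guarantees that a polynomial local diffeomorphism of $\mathbb{R}^{n}$ that is proper is automatically a global diffeomorphism. The organizing device is the auxiliary function
\[
H(x,y)=\frac{1}{2}\left(f(x,y)^{2}+g(x,y)^{2}\right),
\]
for which a direct computation yields $H_{x}=ff_{x}+gg_{x}$ and $H_{y}=ff_{y}+gg_{y}$. Thus the two polynomials named in the hypothesis are precisely the partial derivatives of $H$. Since $H=\frac{1}{2}\lVert F\rVert^{2}$, the map $F$ is proper if and only if $H$ is coercive, i.e. $H(x,y)\to+\infty$ as $x^{2}+y^{2}\to\infty$, so the entire problem reduces to proving that $H$ is coercive.

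The algebraic heart of the argument is to show that the hypothesis forces the leading homogeneous part $H_{\mathrm{top}}$ of $H$ to be positive definite. First I would observe that, because $f^{2}$ and $g^{2}$ are squares, their top-degree parts cannot cancel to lower the degree; hence $H_{\mathrm{top}}$ is a nonnegative binary form of some even degree $2m$ (with $m=\max\{\deg f,\deg g\}$), and its partial derivatives are exactly the higher homogeneous terms of $ff_{x}+gg_{x}$ and $ff_{y}+gg_{y}$. The key claim is that $H_{\mathrm{top}}(x,y)>0$ for all $(x,y)\neq(0,0)$. Indeed, suppose $\ell=ax+by$ were a real linear factor of $H_{\mathrm{top}}$. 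Since $H_{\mathrm{top}}\geq 0$ cannot change sign across the line $\ell=0$, the factor $\ell$ must occur with even multiplicity, so $\ell^{2}\mid H_{\mathrm{top}}$ and therefore $\ell$ divides both $\partial_{x}H_{\mathrm{top}}$ and $\partial_{y}H_{\mathrm{top}}$. This contradicts the assumption that the higher homogeneous terms of $ff_{x}+gg_{x}$ and $ff_{y}+gg_{y}$ share no common real linear factor. Consequently $H_{\mathrm{top}}$, being a nonnegative binary form with no real linear factor, has no nonzero real zero, hence is strictly positive on the unit circle.

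From $H_{\mathrm{top}}>0$ on the compact unit circle together with homogeneity, I obtain $H_{\mathrm{top}}(x,y)\geq c\,(x^{2}+y^{2})^{m}$ for some constant $c>0$, while the remaining terms of $H$ have degree at most $2m-1$ and are therefore dominated for large $x^{2}+y^{2}$. This gives $H(x,y)\to+\infty$, so $F$ is proper. Since $\det DF(x,y)\neq 0$ everywhere, $F$ is a local diffeomorphism, and a proper polynomial local diffeomorphism of $\mathbb{R}^{2}$ is a global diffeomorphism by the Hadamard-type theorem; in particular $F$ is injective, as desired.

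The step that will require the most care is the passage from the hypothesis to the positive definiteness of $H_{\mathrm{top}}$, together with the degree bookkeeping needed to justify that the leading form of $H$ really is built from the leading forms of $f$ and $g$. One must handle separately the case $\deg f\neq\deg g$ and the degenerate situations in which $H_{\mathrm{top}}$ depends on a single variable, so that one of the two leading parts vanishes identically; in each such case the same even-multiplicity argument shows that the hypothesis would be violated, so these degeneracies cannot occur. The only external ingredient is the polynomial Hadamard theorem, which is exactly the result proved in the first part of the paper, so no additional topological input is needed.
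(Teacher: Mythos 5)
Your overall strategy---reducing injectivity to coercivity of $H=\tfrac12\parallel F\parallel_2^2$ and invoking the polynomial Hadamard theorem (Theorem \ref{th3})---is the same as the paper's, and your treatment of the nondegenerate case is correct: when both $\partial_x H_{\mathrm{top}}\not\equiv 0$ and $\partial_y H_{\mathrm{top}}\not\equiv 0$, the higher homogeneous terms of $ff_x+gg_x$ and $ff_y+gg_y$ are exactly $\partial_x H_{\mathrm{top}}$ and $\partial_y H_{\mathrm{top}}$, your even-multiplicity argument shows $H_{\mathrm{top}}$ is positive definite, and coercivity follows. This is precisely Case 1 of the paper's proof of Theorem \ref{th2} with $\mathbf{s}=(1,1)$, i.e.\ an application of Theorem \ref{th10}.

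The gap is your claim that the degenerate case ``cannot occur'' under the hypothesis. It can. Take
\begin{equation*}
f=x-y^3-y,\qquad g=x+y^3+y .
\end{equation*}
Then $F(0,0)=(0,0)$, $\det DF=6y^2+2>0$, and
\begin{equation*}
H=\tfrac12\left(f^2+g^2\right)=x^2+\left(y^3+y\right)^2=x^2+y^6+2y^4+y^2 ,
\end{equation*}
so that $ff_x+gg_x=H_x=2x$ and $ff_y+gg_y=H_y=6y^5+8y^3+2y$. The higher homogeneous terms are $2x$ and $6y^5$, which share no real linear factor, so the hypothesis of the theorem holds; yet $H_{\mathrm{top}}=y^6$ vanishes on the whole $x$-axis and is not positive definite. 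The point is that when $\partial_xH_{\mathrm{top}}\equiv0$, the higher homogeneous term of $H_x$ is the $x$-derivative of a \emph{lower} homogeneous component of $H$ (here $H_2=x^2+y^2$, not $H_6$), so the hypothesis constrains two different homogeneous pieces of $H$; no argument about real linear factors of $H_{\mathrm{top}}$ alone---in particular not the even-multiplicity argument, which concerns only $H_{\mathrm{top}}$---can rule this situation out, nor extract coercivity from it. This is exactly what Case 2 of the paper's proof of Theorem \ref{th2} is for: there one proves by induction (Claim 1) that the pure-power parts $\mathscr{H}_{m_i}\left(\mathbf{x}_i\right)$ of the relevant components $H_{m_i}$ are positive definite in their own variables, and then (Claim 2) constructs an anisotropic weight $\mathbf{\tilde{s}}$ under which those pieces recombine into the leading quasi-homogeneous term of $H$ (in the example above, the weights $(6,2)$ for $(x,y)$ make $x^2+y^6$ that leading term), after which Lemma \ref{le7} yields coercivity and Theorem \ref{th3} concludes. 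Your proposal has no substitute for this anisotropic-weight step, so as written it proves the theorem only under the extra assumption that $H_{\mathrm{top}}$ depends on both variables.
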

\noindent Recently,  many authors either  improve or generalize Theorem \ref{th7}, but thier results only restrict to two-dimensional, see for instance \cite{MR4776971,vall2021,MR4219338,vall2022,MR4213674}.
 For other works of the  two-dimensional  real Jacobian conjecture, see \cite{MR3443401,MR3980031,MR3866202,tian2021necessary}.

However, to the best of our knowledge,  there are only few results about the $n$-dimensional real Jacobian conjecture. The polynomial version of  Hadamard's theorem 
 in $\mathbb{R}^n$ gives a
 necessary and sufficient condition for the $n$-dimensional real Jacobian conjecture, that is,
\begin{thm}\label{th3}{\rm (see \cite{MR3319979,MR787404})}
	Assume that the polynomial map $F=\left(f_1,\ldots,f_n\right):\mathbb{R}^n\rightarrow \mathbb{R}^n$ satisfies  $F\left(\mathbf{0}\right)=\mathbf{0}$ and $\det DF\left(\mathbf{x}\right)\neq0$ for all $\mathbf{x}\in\mathbb{R}^n$.  Define  a norm   $\parallel\cdot\parallel$ in Euclidean space  $\mathbb{R}^n$. Then $F$ is injective if and only if
	\begin{align}\label{eq-47}
		&\lim_{\parallel \mathbf{x}\parallel\rightarrow\infty}\parallel F\left(\mathbf{x}\right)\parallel=\infty.
	\end{align}
\end{thm}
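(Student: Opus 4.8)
The plan is to separate the equivalence into its two implications and to treat the substantive direction — that the properness condition \eqref{eq-47} forces injectivity, which is exactly the content of Hadamard's theorem — by a flow argument rather than by covering-space topology. Since $\det DF(\mathbf{x})\neq 0$ everywhere, the matrix $DF(\mathbf{x})$ is invertible at every point, so for each value $\mathbf{c}\in F(\mathbb{R}^n)$ one may introduce the real-analytic ``continuation'' (Newton) vector field
\[
X_{\mathbf{c}}(\mathbf{x})=-\left[DF(\mathbf{x})\right]^{-1}\bigl(F(\mathbf{x})-\mathbf{c}\bigr),
\]
which is globally defined on $\mathbb{R}^n$ because its only denominator is the nowhere-vanishing $\det DF$. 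The whole proof of the hard direction is read off from the behavior of its orbits.

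First I would establish the key linear identity along an orbit $\mathbf{x}(t)$ of $X_{\mathbf{c}}$: by the chain rule $\tfrac{d}{dt}\bigl(F(\mathbf{x}(t))-\mathbf{c}\bigr)=-\bigl(F(\mathbf{x}(t))-\mathbf{c}\bigr)$, so $F(\mathbf{x}(t))=\mathbf{c}+e^{-t}\bigl(F(\mathbf{x}(0))-\mathbf{c}\bigr)$; in particular $\|F(\mathbf{x}(t))\|$ stays bounded for $t\geq 0$ and tends to $\mathbf{c}$. Assuming \eqref{eq-47}, boundedness of $\|F\|$ along the orbit forces $\|\mathbf{x}(t)\|$ to stay bounded, whence the orbit is relatively compact and forward complete (no finite-time escape). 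Its $\omega$-limit set is therefore nonempty, compact and connected, and is contained in $F^{-1}(\mathbf{c})$. A direct computation gives $DX_{\mathbf{c}}(\mathbf{p})=-I$ at every $\mathbf{p}\in F^{-1}(\mathbf{c})$ (the term differentiating $[DF]^{-1}$ drops out because $F(\mathbf{p})-\mathbf{c}=\mathbf{0}$), so each preimage of $\mathbf{c}$ is a hyperbolic sink and $F^{-1}(\mathbf{c})$ is discrete. A connected subset of a discrete set is a single point, so every orbit converges to one sink. The basins of these sinks are open, pairwise disjoint, and (by the previous step) cover the connected space $\mathbb{R}^n$; hence there is exactly one sink, i.e. $F^{-1}(\mathbf{c})$ is a singleton. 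As $\mathbf{c}$ ranges over $F(\mathbb{R}^n)$ this is precisely injectivity.

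For the converse I would first reduce it to a surjectivity statement: for an injective local diffeomorphism, $F$ is a homeomorphism onto the open image $U=F(\mathbb{R}^n)$, and \eqref{eq-47} is equivalent to $U$ being closed, i.e. to $U=\mathbb{R}^n$. This is the step where the polynomial hypothesis is genuinely indispensable: the smooth injective local diffeomorphism $\arctan:\mathbb{R}\to\mathbb{R}$ shows that injectivity alone never suffices. I would exploit that $U$ and the graph of $F^{-1}$ are semialgebraic (Tarski--Seidenberg): if $U$ were not closed, pick $\mathbf{y}_0\in\partial U\setminus U$, use the curve selection lemma to produce a semialgebraic arc $\gamma(s)\to\mathbf{y}_0$ inside $U$, and pull it back by the semialgebraic map $F^{-1}$ to an arc $\sigma(s)=F^{-1}(\gamma(s))$ with $\|\sigma(s)\|\to\infty$ while $F(\sigma(s))\to\mathbf{y}_0$ stays finite; a Puiseux expansion of $\sigma$ at infinity then yields the contradiction. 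I expect this converse to be the main obstacle: the dynamical argument above delivers the Hadamard direction cleanly, whereas ruling out a finite boundary value of $F$ at infinity requires the algebraic (semialgebraic) structure of a polynomial map and cannot be seen from the flow alone, since without \eqref{eq-47} the orbits of $X_{\mathbf{c}}$ can no longer be confined to a compact set.
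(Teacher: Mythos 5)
Your proof of the sufficiency direction (properness implies injectivity) is correct, and it takes a genuinely different route from the paper's. The paper works with the single gradient field $\mathscr{Y}=-\partial_{\mathbf{x}}H$, $H=\parallel F\parallel_2^2/2$: by Lemma \ref{le2} its equilibria are exactly the zeros of $F$ and each has index $(-1)^n$; under \eqref{eq-47} the infinity of $\mathscr{Y}$ is a repeller (Proposition \ref{pr1}, using Proposition \ref{pr2}), so the index identity of Theorem \ref{th4} forces a unique equilibrium, and injectivity follows from the translation trick $\overline{F}(\mathbf{z})=F(\mathbf{z}+\mathbf{b})-\mathbf{c}$. You instead use, for each value $\mathbf{c}$, the Newton field $X_{\mathbf{c}}=-[DF]^{-1}(F-\mathbf{c})$, along whose orbits $F-\mathbf{c}$ decays exponentially; properness confines the orbits, every point of $F^{-1}(\mathbf{c})$ is a hyperbolic sink since $DX_{\mathbf{c}}=-I$ there, and connectedness of $\mathbb{R}^n$ (the basins are open, disjoint, nonempty, and cover) forces $F^{-1}(\mathbf{c})$ to be a singleton. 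Your argument is more elementary and self-contained: it needs no Poincar\'{e} compactification, no index theorem from \cite{MR4066015}, no Milnor index lemma, and no translation trick, and it makes transparent that this direction has nothing to do with polynomiality (it works for any $C^2$ local diffeomorphism). What the paper's choice buys is that the field $\mathscr{Y}$ and its quasi-homogeneous truncations are exactly the objects reused in Theorems \ref{th10} and \ref{th2}, so its proof doubles as setup for the rest of the paper.

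The necessity direction is where your proposal has a genuine gap. Your reduction is fine: for an injective local diffeomorphism, \eqref{eq-47} is equivalent to closedness of the open image $U=F(\mathbb{R}^n)$. But the final step, ``a Puiseux expansion of $\sigma$ at infinity then yields the contradiction,'' is not a proof, and no contradiction can be extracted from a local analysis of the escaping arc alone. The configuration you want to exclude --- a semialgebraic arc $\sigma(s)\rightarrow\infty$ along which $F(\sigma(s))$ converges to a finite point --- genuinely occurs for polynomial local diffeomorphisms: Pinchuk's map \cite{MR1292168} has nowhere-vanishing Jacobian determinant and is not injective, hence not proper (a proper local homeomorphism of $\mathbb{R}^2$ onto $\mathbb{R}^2$ would be a covering map of a simply connected space, hence injective), so by curve selection it carries exactly such an arc, with the same semialgebraic and Puiseux structure as yours. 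Therefore any correct completion must use injectivity \emph{globally}; ruling out finite asymptotic values of injective polynomial maps is precisely the theorem of Bia{\l}ynicki-Birula and Rosenlicht (injective polynomial self-maps of $\mathbb{R}^n$ are surjective, hence homeomorphisms), whose proof is an induction/degree-type argument and not a one-line expansion. You should either insert that theorem (with proof or citation) where the Puiseux step sits, or cite the references \cite{MR3319979,MR787404} as the paper's statement does. For what it is worth, the paper's own necessity argument is vulnerable at the very same point: it chooses $\delta>\max_{\mathbf{x}\in F^{-1}(B_M(\mathbf{0}))}\parallel\mathbf{x}\parallel_2^2$, and the existence of this maximum presupposes that $F^{-1}(B_M(\mathbf{0}))$ is bounded, which is the properness being proved; so neither your sketch nor the paper's text contains a complete proof of this implication.
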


In this paper,  we first provide an alternate proof of Theorem \ref{th3}, which is borrowed from the techniques for qualitative theory of dynamical systems rather than the nonlinear functional analysis, see Section \ref{se4}.

{\bf Convention}. Since the norms $\parallel\cdot\parallel$ are all equivalent in Euclidean space  $\mathbb{R}^n$, unless otherwise specified, we will take the Euclid norm in our proofs, that is,
$$\parallel\mathbf{x}\parallel_2\triangleq\left(x_1^2+\cdots+x_n^2\right)^{\frac{1}{2}}.$$

  By the structure of polynomial map $F$, Cima et al. in \cite{MR1362759,cima1995global}  presented some sufficient  conditions  for the $n$-dimensional real Jacobian conjecture. To introduce the main result of \cite{MR1362759}, we next recall the definition of quasi-homogeneous.

Let $\mathbf{s}=\left(s_1,\ldots,s_n\right)$, $\mathbf{x}=\left(x_1,\ldots,x_n\right)$ and  $\lambda^{\mathbf{s}}\mathbf{x}=\left(\lambda^{s_1}x_1,\ldots,\lambda^{s_n}x_n\right)$.
The polynomial  $\mathcal{P}\left(\mathbf{x}\right)$ of $n$ variables is said to be
\emph{quasi-homogeneous of weighted degree $\ell$ with respect to weight exponents $\mathbf{s}=\left(s_1,\ldots,s_n\right)$} if there exist  vector $\mathbf{s}=\left(s_1,\ldots,s_n\right)\in\mathbb{N}_+^n$ and $\ell\in\mathbb{N}_+$ such that $\mathcal{P}\left(\lambda^{\mathbf{s}}\mathbf{x}\right)=\lambda^\ell\mathcal{P}\left(\mathbf{x}\right)$ for arbitrary $\lambda>0$.  For given a weight exponents $\mathbf{s}=\left(s_1,\ldots,s_n\right)\in\mathbb{N}_+^n$, the polynomial $\mathcal{P}\left(\mathbf{x}\right)$ can be represented in the sum of its $\mathbf{s}$-quasi-homogeneous terms
$\mathcal{P}\left(\mathbf{x}\right)=\mathcal{P}_1\left(\mathbf{x}\right)+\cdots+\mathcal{P}_d\left(\mathbf{x}\right)$, where  $\mathcal{P}_i$ is quasi-homogeneous polynomial of weighted degree $i$ with respect to weight exponents $\mathbf{s}$. In this formula,  the polynomial $\mathcal{P}_d\left(\mathbf{x}\right)$ is called \emph{the higher $\mathbf{s}$-quasi-homogeneous term of $\mathcal{P}\left(\mathbf{x}\right)$}, and the \emph{$\mathbf{s}$-weighted degree of $\mathcal{P}\left(\mathbf{x}\right)$} is defined as $\deg_{\mathbf{s}}\mathcal{P}\left(\mathbf{x}\right)=d$.

Let $f_i^\mathbf{s}$ be the higher $\mathbf{s}$-quasi-homogeneous term of polynomial $f_i$  with $i=1,\ldots,n$. \emph{The higher $\mathbf{s}$-quasi-homogeneous  part of the  polynomial map $F$} is defined by $F_\mathbf{s}=\left(f_1^\mathbf{s},\ldots,f_n^\mathbf{s}\right)$. The main result of \cite{MR1362759} was presented
as follows.
\begin{thm}\label{th5}{\rm (see \cite{MR1362759})}
	Assume that the polynomial map $F=\left(f_1,\ldots,f_n\right):\mathbb{R}^n\rightarrow \mathbb{R}^n$ satisfies  $F\left(\mathbf{0}\right)=\mathbf{0}$ and $\det DF\left(\mathbf{x}\right)\neq0$ for all $\mathbf{x}\in\mathbb{R}^n$. If there exists a weight  exponents $\mathbf{s}=\left(s_1,\ldots,s_n\right)\in\mathbb{N}_+^n$  such that $F_\mathbf{s}$ only vanishes at the origin in $\mathbb{R}^n$, then $F$ is injective.
\end{thm}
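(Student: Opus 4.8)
The plan is to deduce the conclusion from the polynomial Hadamard theorem (Theorem \ref{th3}): since $F(\mathbf 0)=\mathbf 0$ and $\det DF$ never vanishes, it suffices to prove that $\|F(\mathbf x)\|\to\infty$ as $\|\mathbf x\|\to\infty$. To exploit the quasi-homogeneous structure I would first introduce weighted polar coordinates adapted to $\mathbf s=(s_1,\dots,s_n)$. Put $m=\mathrm{lcm}(s_1,\dots,s_n)$ and define the quasi-homogeneous norm $\rho(\mathbf x)=\big(\sum_{i=1}^n x_i^{2m/s_i}\big)^{1/(2m)}$; each exponent $2m/s_i$ is a positive even integer, so $\rho$ is continuous, positive definite, and satisfies $\rho(\lambda^{\mathbf s}\mathbf x)=\lambda\,\rho(\mathbf x)$ for $\lambda>0$. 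Consequently the quasi-homogeneous sphere $S=\{\boldsymbol\theta:\rho(\boldsymbol\theta)=1\}$ is compact, and every $\mathbf x\neq\mathbf 0$ factors uniquely as $\mathbf x=\lambda^{\mathbf s}\boldsymbol\theta$ with $\lambda=\rho(\mathbf x)>0$ and $\boldsymbol\theta\in S$. A direct estimate $\lambda^{s_{\min}}\|\boldsymbol\theta\|\le\|\mathbf x\|\le\lambda^{s_{\max}}\|\boldsymbol\theta\|$ valid for $\lambda\ge 1$, together with the boundedness of $\|\boldsymbol\theta\|$ away from $0$ and $\infty$ on the compact set $S$, shows that $\|\mathbf x\|\to\infty$ is equivalent to $\lambda\to\infty$. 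Hence it is enough to prove that $\|F(\lambda^{\mathbf s}\boldsymbol\theta)\|\to\infty$ uniformly in $\boldsymbol\theta\in S$ as $\lambda\to\infty$.

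Writing $d_i=\deg_{\mathbf s}f_i$ and splitting $f_i=f_i^{\mathbf s}+\tilde f_i$ with $\tilde f_i$ collecting the terms of $\mathbf s$-weighted degree at most $d_i-1$, quasi-homogeneity gives $f_i^{\mathbf s}(\lambda^{\mathbf s}\boldsymbol\theta)=\lambda^{d_i}f_i^{\mathbf s}(\boldsymbol\theta)$, while $\tilde f_i(\lambda^{\mathbf s}\boldsymbol\theta)=\sum_{k<d_i}\lambda^{k}f_{i,k}(\boldsymbol\theta)$ is a sum of strictly lower powers of $\lambda$ (here $f_{i,k}$ is the $\mathbf s$-quasi-homogeneous part of $f_i$ of weighted degree $k$). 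Since each $f_i(\mathbf 0)=0$ and no $f_i\equiv 0$ (otherwise $\det DF\equiv 0$), we have $d_i\ge 1$. The normalized components $h_i(\lambda,\boldsymbol\theta):=\lambda^{-d_i}f_i(\lambda^{\mathbf s}\boldsymbol\theta)=f_i^{\mathbf s}(\boldsymbol\theta)+\sum_{k<d_i}\lambda^{k-d_i}f_{i,k}(\boldsymbol\theta)$ then converge to $f_i^{\mathbf s}(\boldsymbol\theta)$ uniformly on $S$ as $\lambda\to\infty$, because each $f_{i,k}$ is bounded on the compact set $S$ and $k-d_i\le-1$. The hypothesis that $F_{\mathbf s}$ vanishes only at the origin means $F_{\mathbf s}$ is nonzero on $S$, so by compactness $c:=\min_{\boldsymbol\theta\in S}\|F_{\mathbf s}(\boldsymbol\theta)\|>0$. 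Combining this with the uniform convergence $\sum_i h_i(\lambda,\boldsymbol\theta)^2\to\|F_{\mathbf s}(\boldsymbol\theta)\|^2$ yields a threshold $\Lambda\ge 1$ such that $\sum_{i=1}^n h_i(\lambda,\boldsymbol\theta)^2\ge c^2/2$ for all $\lambda>\Lambda$ and all $\boldsymbol\theta\in S$.

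Finally I would assemble the growth estimate. For $\lambda>\Lambda$ and $\boldsymbol\theta\in S$,
\begin{align*}
\|F(\lambda^{\mathbf s}\boldsymbol\theta)\|^2=\sum_{i=1}^n\lambda^{2d_i}h_i(\lambda,\boldsymbol\theta)^2\ge\lambda^{2d_{\min}}\sum_{i=1}^n h_i(\lambda,\boldsymbol\theta)^2\ge\tfrac{c^2}{2}\,\lambda^{2d_{\min}},
\end{align*}
where $d_{\min}=\min_i d_i\ge 1$ and we used $\lambda^{2d_i}\ge\lambda^{2d_{\min}}$ for $\lambda\ge 1$. Thus $\|F(\lambda^{\mathbf s}\boldsymbol\theta)\|\ge \tfrac{c}{\sqrt 2}\,\lambda^{d_{\min}}\to\infty$ uniformly in $\boldsymbol\theta$, and by the equivalence of $\lambda\to\infty$ with $\|\mathbf x\|\to\infty$ we obtain $\lim_{\|\mathbf x\|\to\infty}\|F(\mathbf x)\|=\infty$. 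Theorem \ref{th3} then gives the injectivity of $F$.

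The main obstacle I anticipate is the uniformity of the blow-up over the sphere $S$: the components $f_i$ generally have different weighted degrees $d_i$, so the naive pointwise argument ``for each $\boldsymbol\theta$ some $f_i^{\mathbf s}(\boldsymbol\theta)\neq 0$ forces $|f_i|\sim\lambda^{d_i}$'' does not immediately produce a uniform lower bound, because the dominant index may vary with $\boldsymbol\theta$. Normalizing each component by its own factor $\lambda^{d_i}$ before passing to the limit, and then using the crude but uniform bound $\lambda^{2d_i}\ge\lambda^{2d_{\min}}$, is exactly what circumvents this difficulty; verifying the equivalence $\|\mathbf x\|\to\infty\Leftrightarrow\lambda\to\infty$ and the well-definedness of the weighted polar coordinates are the remaining routine points.
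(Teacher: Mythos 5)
Your proof is correct, and it is worth noting that the paper itself never proves this statement: Theorem \ref{th5} is quoted from Cima--Gasull--Ma\~{n}osas \cite{MR1362759}, so there is no in-paper proof to match against. Your strategy (show properness, i.e.\ $\parallel F\left(\mathbf{x}\right)\parallel\rightarrow\infty$, then invoke the polynomial Hadamard theorem, Theorem \ref{th3}) is exactly the strategy the paper uses for its own results, and your weighted polar coordinates are the same device as Lemma \ref{le-6} and the blow-up in Lemma \ref{le7}, with $\rho=1/\lambda$. The genuine difference is where the quasi-homogeneous analysis is performed. The paper's machinery is scalar: Lemma \ref{le7} applies to a single polynomial whose higher $\mathbf{s}$-quasi-homogeneous term vanishes only at the origin, and in Theorem \ref{th10} it is applied to $H=\parallel F\parallel_2^2/2$. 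That route cannot yield Theorem \ref{th5}, because the hypothesis on $F_{\mathbf{s}}$ does not imply the corresponding hypothesis on $H_{\mathbf{s}}$ --- this is precisely the content of the paper's Example \ref{ex1}. Your componentwise normalization $h_i=\lambda^{-d_i}f_i\left(\lambda^{\mathbf{s}}\boldsymbol{\theta}\right)$, allowing each $f_i$ its own weighted degree $d_i$ and then using the crude bound $\lambda^{2d_i}\geq\lambda^{2d_{\min}}$ for $\lambda\geq1$, is the extra idea that makes the vector-valued case work; it is in effect a vector analogue of Lemma \ref{le7}, and the obstacle you flag (the dominant index varying with $\boldsymbol{\theta}$) is exactly the obstruction that separates Theorem \ref{th5} from Theorem \ref{th10}. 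All the supporting details check out: $2m/s_i$ even makes $\rho$ well defined and positive definite, $d_i\geq1$ follows from $f_i\left(\mathbf{0}\right)=0$ and $f_i\not\equiv0$ (else $\det DF\equiv0$), compactness of $S$ gives the uniform lower bound $c>0$ and the uniform convergence, and both hypotheses needed for Theorem \ref{th3} are available.
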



	
Let $P_i^{\mathbf{s}}\left(\mathbf{x}\right)$ be the higher $\mathbf{s}$-quasi-homogeneous term of polynomial $P_i\left(\mathbf{x}\right)$  with $i=1,\ldots,n$.  \emph{The higher $\mathbf{s}$-quasi-homogeneous  part of the  polynomial vector field $X=\left(P_1\left(\mathbf{x}\right),\ldots,P_n\left(\mathbf{x}\right)\right)$} is given by $X_{\mathbf{s}}=\left(P_1^{\mathbf{s}}\left(\mathbf{x}\right),\ldots,P_n^{\mathbf{s}}\left(\mathbf{x}\right)\right)$. We denote $\partial_\mathbf{x}\mathcal{P}=\left(\partial_{x_1}\mathcal{P},\ldots,\partial_{x_n}\mathcal{P}\right)$.

In Section \ref{se2}, we present the following two algebraic sufficient conditions to guarantee that the $n$-dimensional real Jacobian conjecture holds.

The first one is similar to Theorem  \ref{th5}, the only difference is that we take the function $\partial_{\mathbf{x}}H_{\mathbf{s}}\left(\mathbf{x}\right)$ instead of  the function
$F_\mathbf{s}$, where $H\left(\bf{x}\right)=\parallel F\left(\bf{x}\right)\parallel_2^2/2$.
\begin{theorem}\label{th10}
	Assume that the polynomial map $F=\left(f_1,\ldots,f_n\right):\mathbb{R}^n\rightarrow \mathbb{R}^n$ satisfies  $F\left(\mathbf{0}\right)=\mathbf{0}$ and $\det DF\left(\mathbf{x}\right)\neq0$ for all $\mathbf{x}\in\mathbb{R}^n$. Let $H\left(\bf{x}\right)=\parallel F\left(\bf{x}\right)\parallel_2^2/2$ and $H_{\mathbf{s}}\left(\mathbf{x}\right)$ be the higher $\mathbf{s}$-quasi-homogeneous term of $H\left(\mathbf{x}\right)$ with  $\mathbf{s}=\left(s_1,\ldots,s_n\right)\in\mathbb{N}_+^n$. If there exists a weight  exponents $\mathbf{s}=\left(s_1,\ldots,s_n\right)\in\mathbb{N}_+^n$  such that $\partial_{\mathbf{x}}H_{\mathbf{s}}\left(\mathbf{x}\right)$ only vanishes at the origin in $\mathbb{R}^n$ (or $\mathbf{x}=\mathbf{0}$ is an isolated  (or a unique) zero of $H_{\mathbf{s}}\left(\mathbf{x}\right)$), then $F$ is injective.
\end{theorem}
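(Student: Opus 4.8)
The plan is to deduce injectivity from the polynomial Hadamard theorem (Theorem \ref{th3}): it suffices to prove that $H(\mathbf{x})=\tfrac12\|F(\mathbf{x})\|_2^2\to\infty$ as $\|\mathbf{x}\|_2\to\infty$, since $\|F(\mathbf{x})\|_2^2=2H(\mathbf{x})$. The starting observation is that $H\ge 0$ everywhere, being a sum of squares, and $H(\mathbf{0})=0$. First I would record that the higher $\mathbf{s}$-quasi-homogeneous term $H_{\mathbf{s}}$ inherits nonnegativity. Writing the $\mathbf{s}$-quasi-homogeneous decomposition $H(\mathbf{x})=\sum_{j=1}^{d}H_j(\mathbf{x})$ with $d=\deg_{\mathbf{s}}H$ and $H_d=H_{\mathbf{s}}$, one has $H(\lambda^{\mathbf{s}}\mathbf{x})=\sum_{j=1}^{d}\lambda^{j}H_j(\mathbf{x})$, so $\lambda^{-d}H(\lambda^{\mathbf{s}}\mathbf{x})\to H_{\mathbf{s}}(\mathbf{x})$ as $\lambda\to\infty$; since the left-hand side is $\ge 0$ for every $\lambda>0$, the limit gives $H_{\mathbf{s}}(\mathbf{x})\ge 0$ for all $\mathbf{x}$.

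Next I would reconcile the three forms of the hypothesis and upgrade nonnegativity to positive definiteness. By Euler's identity for the quasi-homogeneous $H_{\mathbf{s}}$, namely $\sum_{i=1}^{n}s_i x_i\,\partial_{x_i}H_{\mathbf{s}}(\mathbf{x})=d\,H_{\mathbf{s}}(\mathbf{x})$, every zero of $\partial_{\mathbf{x}}H_{\mathbf{s}}$ is automatically a zero of $H_{\mathbf{s}}$; conversely, because $H_{\mathbf{s}}\ge 0$, any zero of $H_{\mathbf{s}}$ is a global minimum and hence a zero of $\partial_{\mathbf{x}}H_{\mathbf{s}}$. Using in addition the invariance of the zero set under $\mathbf{x}\mapsto\lambda^{\mathbf{s}}\mathbf{x}$, which forces an isolated zero of a quasi-homogeneous polynomial to be its unique zero, all stated conditions coincide, and I obtain $H_{\mathbf{s}}(\mathbf{x})>0$ for all $\mathbf{x}\neq\mathbf{0}$.

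The core step is then a quasi-homogeneous blow-up at infinity. Setting $k=\mathrm{lcm}(s_1,\dots,s_n)$ and $G(\mathbf{x})=\sum_{i=1}^{n}x_i^{2k/s_i}$, the exponents $2k/s_i$ are positive even integers, so $G$ is a positive definite $\mathbf{s}$-quasi-homogeneous polynomial of weighted degree $2k$ and the level set $S=\{G=1\}$ is a compact cross-section of the scaling flow; every $\mathbf{x}\neq\mathbf{0}$ factors uniquely as $\mathbf{x}=\lambda^{\mathbf{s}}\mathbf{u}$ with $\lambda=G(\mathbf{x})^{1/(2k)}>0$ and $\mathbf{u}\in S$. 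On the compact set $S$ the positive continuous function $H_{\mathbf{s}}$ attains a minimum $m_0>0$, while each $H_j$ is bounded. Writing $H(\lambda^{\mathbf{s}}\mathbf{u})=\lambda^{d}\big(H_{\mathbf{s}}(\mathbf{u})+\sum_{j<d}\lambda^{\,j-d}H_j(\mathbf{u})\big)$ and using that each $\lambda^{\,j-d}\to 0$ uniformly on $S$, there is $\Lambda>0$ with $H(\lambda^{\mathbf{s}}\mathbf{u})\ge \tfrac12 m_0\lambda^{d}$ for all $\lambda>\Lambda$ and $\mathbf{u}\in S$. Since $G(\mathbf{x})\ge\max_i|x_i|^{2k/s_i}$, the Euclidean limit $\|\mathbf{x}\|_2\to\infty$ forces $G(\mathbf{x})\to\infty$, hence $\lambda\to\infty$, and therefore $H(\mathbf{x})\to\infty$; Theorem \ref{th3} then yields injectivity.

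I expect the main obstacle to be the uniform control of the lower weighted-degree terms in the last step: one must ensure that $\sum_{j<d}\lambda^{\,j-d}H_j(\mathbf{u})$ is negligible relative to $H_{\mathbf{s}}(\mathbf{u})$ simultaneously for all directions $\mathbf{u}$, which is exactly what the compactness of the weighted cross-section $S$, together with the strict bound $m_0>0$, is designed to provide. The subsidiary delicate point is the passage between the weighted radius $\lambda$ and the Euclidean norm, for which the explicit gauge $G$ makes the comparison transparent.
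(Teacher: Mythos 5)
Your proof is correct and takes essentially the same route as the paper: nonnegativity of $H_{\mathbf{s}}$, the Euler-identity/global-minimum argument identifying the zeros of $\partial_{\mathbf{x}}H_{\mathbf{s}}$ with those of $H_{\mathbf{s}}$ (the paper's Lemma \ref{le4} and Corollary \ref{co2}), a weighted blow-up with a compact cross-section to get $H(\mathbf{x})\rightarrow+\infty$ as $\parallel\mathbf{x}\parallel_2\rightarrow+\infty$ (the paper's Lemma \ref{le7}), and finally the polynomial Hadamard theorem (Theorem \ref{th3}). The only difference is technical: your cross-section is the level set of the explicit gauge $G(\mathbf{x})=\sum_{i}x_i^{2k/s_i}$ with $k=\mathrm{lcm}(s_1,\ldots,s_n)$, while the paper parametrizes the blow-up by $x_i=y_i/\rho^{s_i}$ with $\mathbf{y}$ on a Euclidean sphere (Lemma \ref{le-6}); both serve the identical purpose.
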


It is worth pointing out that the  condition $\partial_{\mathbf{x}}H_{\mathbf{s}}\left(\mathbf{x}\right)$ only vanishes at the origin  and
the condition $\mathbf{x}=\mathbf{0}$ is an isolated  (or a unique) zero of $H_{\mathbf{s}}\left(\mathbf{x}\right)$ are equivalent, this will be
proved in Corollary \ref{co2}.

The second one generalizes the Theorem \ref{th7} not only to quasi-homogeneous type but also to $\mathbb{R}^n$.
\begin{theorem}\label{th2}
	Assume that the polynomial map $F=\left(f_1,\ldots,f_n\right):\mathbb{R}^n\rightarrow \mathbb{R}^n$ satisfies  $F\left(\mathbf{0}\right)=\mathbf{0}$ and $\det DF\left(\mathbf{x}\right)\neq0$ for all $\mathbf{x}\in\mathbb{R}^n$.  Let $H\left(\bf{x}\right)=\parallel F\left(\bf{x}\right)\parallel_2^2/2$. Define a gradient vector field  as follows:
	\begin{align}\label{eq14}
		&\mathscr{Y}=\left(-\partial_{x_1}H,\cdots,-\partial_{x_n}H\right).
	\end{align}
	If there exists a weight  exponents $\mathbf{s}=\left(s_1,\ldots,s_n\right)\in\mathbb{N}_+^n$  such that  the higher $\mathbf{s}$-quasi-homogeneous part of vector field $\mathscr{Y}$ only vanishes at the origin in $\mathbb{R}^n$, then $F$ is  injective.
\end{theorem}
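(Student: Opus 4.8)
The plan is to deduce Theorem \ref{th2} from the polynomial Hadamard criterion (Theorem \ref{th3}). Since $H(\mathbf{x})=\parallel F(\mathbf{x})\parallel_2^2/2$, we have $\parallel F(\mathbf{x})\parallel_2\to\infty$ exactly when $H(\mathbf{x})\to\infty$, so it suffices to prove that $H$ is proper, i.e. $H(\mathbf{x})\to\infty$ as $\parallel\mathbf{x}\parallel_2\to\infty$. First I would record the structural identity $\partial_{\mathbf{x}}H=DF^{\top}F$: because $\det DF(\mathbf{x})\neq0$ everywhere, $DF^{\top}$ is invertible, so the zero set of $\nabla H=-\mathscr{Y}$ coincides with the zero set of $F$, and on it $H=0$. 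Thus $H\geq0$ has critical points only at its global minima, and along the gradient flow $\dot{\mathbf{x}}=\mathscr{Y}=-\nabla H$ one has $\dot H=-\parallel\nabla H\parallel_2^2\leq0$; these facts fix the qualitative picture used below.

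To prove properness I argue by contradiction using quasi-homogeneous scaling. Write every nonzero point uniquely as $\mathbf{x}=\rho^{\mathbf{s}}\boldsymbol{\theta}$ with $\rho>0$ and $\boldsymbol{\theta}$ in a fixed compact set meeting each quasi-homogeneous ray once, so that $\rho\to\infty$ as $\parallel\mathbf{x}\parallel_2\to\infty$. If $H$ were not proper there would be a sequence $\mathbf{x}_k=\rho_k^{\mathbf{s}}\boldsymbol{\theta}_k$ with $\rho_k\to\infty$, $\boldsymbol{\theta}_k\to\boldsymbol{\theta}_*\neq\mathbf{0}$ (after passing to a subsequence, by compactness) and $H(\mathbf{x}_k)$ bounded, hence $\parallel F(\mathbf{x}_k)\parallel_2$ bounded. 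Expanding $f_m(\rho_k^{\mathbf{s}}\boldsymbol{\theta}_k)=\rho_k^{\delta_m}f_m^{\mathbf{s}}(\boldsymbol{\theta}_k)+O(\rho_k^{\delta_m-1})$ with $\delta_m=\deg_{\mathbf{s}}f_m\geq1$, boundedness of $f_m(\mathbf{x}_k)$ forces $f_m^{\mathbf{s}}(\boldsymbol{\theta}_*)=0$ for every $m$, that is $F_{\mathbf{s}}(\boldsymbol{\theta}_*)=\mathbf{0}$ (note this step alone reproves Theorem \ref{th5}). Consequently the higher $\mathbf{s}$-quasi-homogeneous term of $H$ is the nonnegative quasi-homogeneous polynomial $H_{\mathbf{s}}=\tfrac12\sum_{\delta_m=\delta_{\max}}\left(f_m^{\mathbf{s}}\right)^2$, which vanishes at $\boldsymbol{\theta}_*$; hence $\boldsymbol{\theta}_*$ is a global minimum of $H_{\mathbf{s}}$ and $\partial_{\mathbf{x}}H_{\mathbf{s}}(\boldsymbol{\theta}_*)=\mathbf{0}$.

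The final step is to conclude that the \emph{component-wise} higher part $\mathscr{Y}_{\mathbf{s}}$ also vanishes at $\boldsymbol{\theta}_*$, contradicting the hypothesis since $\boldsymbol{\theta}_*\neq\mathbf{0}$. For each index $i$ with $\partial_{x_i}H_{\mathbf{s}}\not\equiv0$ this is immediate: there $\mathscr{Y}_i^{\mathbf{s}}=-\partial_{x_i}H_{\mathbf{s}}$, and we have just shown $\partial_{x_i}H_{\mathbf{s}}(\boldsymbol{\theta}_*)=0$. The main obstacle is the remaining \emph{degenerate} indices $i$ with $\partial_{x_i}H_{\mathbf{s}}\equiv0$: here $\mathscr{Y}_i^{\mathbf{s}}$ is the higher term of $-\partial_{x_i}H$ coming from strictly lower weighted-degree contributions, and its value at $\boldsymbol{\theta}_*$ is not controlled by the leading balance $\partial_{\mathbf{x}}H_{\mathbf{s}}(\boldsymbol{\theta}_*)=\mathbf{0}$ alone. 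These indices are precisely what makes Theorem \ref{th2} strictly more general than Theorem \ref{th10}, where $\partial_{\mathbf{x}}H_{\mathbf{s}}$ is required to vanish only at the origin. To handle them I would sharpen the scaling argument: use boundedness of $F(\mathbf{x}_k)$ to pin down the sub-leading rates at which $f_m^{\mathbf{s}}(\boldsymbol{\theta}_k)\to0$, then feed these rates into the lower-order terms of $\partial_{x_i}H=\sum_m f_m\,\partial_{x_i}f_m$, so that the dominant surviving term in the degenerate direction is exactly $\mathscr{Y}_i^{\mathbf{s}}$ and is forced to vanish at $\boldsymbol{\theta}_*$. Carrying out this matched, degree-by-degree accounting in the degenerate directions is the crux; once it yields $\mathscr{Y}_{\mathbf{s}}(\boldsymbol{\theta}_*)=\mathbf{0}$ with $\boldsymbol{\theta}_*\neq\mathbf{0}$, the hypothesis is violated, so $H$ is proper and Theorem \ref{th3} gives the injectivity of $F$.
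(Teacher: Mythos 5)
Your first two steps are sound, and they correspond to machinery the paper also uses: reducing injectivity to properness of $H$ via Theorem \ref{th3}, and the one-parameter scaling argument showing that a non-properness direction $\boldsymbol{\theta}_*\neq\mathbf{0}$ satisfies $F_{\mathbf{s}}(\boldsymbol{\theta}_*)=\mathbf{0}$, hence $H_{\mathbf{s}}(\boldsymbol{\theta}_*)=0$ and $\partial_{\mathbf{x}}H_{\mathbf{s}}(\boldsymbol{\theta}_*)=\mathbf{0}$ (this is essentially Lemma \ref{le7} combined with Lemma \ref{le4}). That much disposes of the case where no index is degenerate, i.e.\ where $\mathscr{Y}_{\mathbf{s}}=-\partial_{\mathbf{x}}H_{\mathbf{s}}$ — which is exactly Theorem \ref{th10}. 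But the degenerate indices $i$ with $\partial_{x_i}H_{\mathbf{s}}\equiv0$ are the \emph{entire} content of Theorem \ref{th2} beyond Theorem \ref{th10}, and for them you offer only a plan (``matched, degree-by-degree accounting''), not an argument. So the proposal has a genuine gap precisely at the step you yourself call the crux.

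Moreover, the target you set for that step is the wrong one, which is why the accounting is unlikely to close. If $\partial_{x_i}H_{\mathbf{s}}\equiv0$ for a whole block of variables, then $H_{\mathbf{s}}$ does not depend on those variables at all; consequently $\partial_{\mathbf{x}}H_{\mathbf{s}}(\boldsymbol{\theta}_*)=\mathbf{0}$ holds automatically whenever the non-degenerate coordinates of $\boldsymbol{\theta}_*$ vanish and gives no constraint in the degenerate directions, while a single scaling parameter $\rho$ cannot resolve how the degenerate-block components of $\boldsymbol{\theta}_k$ behave (their relative rates are a genuinely multi-scale problem). There is also no reason the nonzero zero of $\mathscr{Y}_{\mathbf{s}}$ guaranteed by the theorem should sit at the limit direction $\boldsymbol{\theta}_*$. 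The paper avoids this entirely: it groups the variables into blocks $\mathbf{x}_1,\ldots,\mathbf{x}_r$ according to the distinct degrees $m_1>\cdots>m_r$ of the terms $H_{m_i}$ appearing in $\mathscr{Y}_{\mathbf{s}}$, proves by downward induction that the leading part $\mathscr{H}_{m_i}(\mathbf{x}_i)$ of $H_{m_i}$ in its own block is positive definite (the induction step produces, from a hypothetical nonzero zero, a critical point $\mathbf{b}_{i_0-1}\neq\mathbf{0}$ via Corollary \ref{co2}, then chooses the later blocks $\mathbf{b}_{i_0},\ldots,\mathbf{b}_r$ as global minimizers of restricted polynomials, which exist by Lemma \ref{le7}; the resulting point $\mathbf{b}=(\mathbf{0},\ldots,\mathbf{0},\mathbf{b}_{i_0-1},\ldots,\mathbf{b}_r)$ — \emph{not} a limit of directions — is a nonzero zero of $\mathscr{Y}_{\mathbf{s}}$, a contradiction), and then constructs a new weight $\tilde{\mathbf{s}}=\left(\frac{m}{m_1}\mathbf{s}_1,\ldots,\frac{m}{m_r}\mathbf{s}_r\right)$ with $m=\prod_{i=1}^r m_i$, under which the higher quasi-homogeneous term of $H$ becomes $\sum_{i=1}^r\mathscr{H}_{m_i}(\mathbf{x}_i)$, positive definite, so that Theorem \ref{th10} applies. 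To complete your proof you would need a substitute for this block decomposition and change of weight; the sub-leading-rate bookkeeping as you describe it does not supply one.
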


In  \cite{MR3448779}, the authors tried to discuss a possible equivalence between Theorem \ref{th7} and Theorem \ref{th5}  in $\mathbb{R}^2$, and  raised the following open problem. If this problem has positive answer, then Theorem \ref{th5}  implies Theorem \ref{th7} in $\mathbb{R}^2$. However, they only solved this open problem for some special cases.

\vspace{2ex}
\noindent{\bf Open problem.} (see \cite{MR3448779}) \emph{Assume that the polynomial map $F=\left(f,g\right):\mathbb{R}^2\rightarrow \mathbb{R}^2$ satisfies  $F\left(0,0\right)=\left(0,0\right)$ and $\det DF\left(x,y\right)\neq0$ for all $\left(x,y\right)\in\mathbb{R}^2$. If the higher homogeneous part of vector field $\left(ff_x+gg_x,ff_y+gg_y\right)$ only vanishes at the origin in $\mathbb{R}^2$, then is there a weight  exponents $\mathbf{s}=\left(s_1,s_2\right)\in\mathbb{N}_+^2$ such that the higher $\mathbf{s}$-quasi-homogeneous  part of the  polynomial map $F$ only vanishes at the origin in $\mathbb{R}^2$?}
\vspace{2ex}

In the following theorem, we  show that Theorem \ref{th5}  implies  Theorem \ref{th2} in $\mathbb{R}^n$. Specially when $n=2$, it  give a positive answer to the above open problem.

\begin{theorem}\label{th13}
Assume that the polynomial map $F=\left(f_1,\ldots,f_n\right):\mathbb{R}^n\rightarrow \mathbb{R}^n$ satisfies  $F\left(\mathbf{0}\right)=\mathbf{0}$ and $\det DF\left(\mathbf{x}\right)\neq0$ for all $\mathbf{x}\in\mathbb{R}^n$.  Let $H\left(\bf{x}\right)=\parallel F\left(\bf{x}\right)\parallel_2^2/2$. If there exists a weight  exponents $\mathbf{s}=\left(s_1,\ldots,s_n\right)\in\mathbb{N}_+^n$  such that  the higher $\mathbf{s}$-quasi-homogeneous part of vector field $\mathscr{Y}$ given in \eqref{eq14} only vanishes at the origin in $\mathbb{R}^n$, then there exists a weight  exponents $\mathbf{\tilde{s}}=\left(\tilde{s}_1,\ldots,\tilde{s}_n\right)\in\mathbb{N}_+^n$ such that  the higher $\mathbf{\tilde{s}}$-quasi-homogeneous  part of the  polynomial map $F$ only vanishes at the origin in $\mathbb{R}^n$.
\end{theorem}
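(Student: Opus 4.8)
The plan is to reduce the assertion to a statement about $H$ alone, dispatch the ``generic'' case with Corollary \ref{co2}, and then treat the degenerate case by re-weighting the offending coordinates. Throughout write $H=\tfrac12\sum_{i=1}^n f_i^2$, fix a weight $\mathbf{s}$ as in the hypothesis, set $d_i=\deg_{\mathbf{s}}f_i$ and $d=\max_i d_i$. Since the higher $\mathbf{s}$-quasi-homogeneous term of $f_i^2$ is $(f_i^{\mathbf{s}})^2$, the higher $\mathbf{s}$-quasi-homogeneous term of $H$ is the sum of squares $H_{\mathbf{s}}=\tfrac12\sum_{d_i=d}(f_i^{\mathbf{s}})^2$ of $\mathbf{s}$-weighted degree $2d$ (no cancellation occurs, being a sum of nonzero squares of real forms of equal degree). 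I first record the reduction: for \emph{any} weight $\mathbf{t}$, if $\mathbf{0}$ is the unique zero of $H_{\mathbf{t}}$ then $F_{\mathbf{t}}$ vanishes only at $\mathbf{0}$. Indeed, over $\mathbb{R}$ a sum of squares vanishes exactly where each summand does, so the top-degree terms $\{f_i^{\mathbf{t}}:d_i=d\}$ have $\{\mathbf{0}\}$ as their only common zero; adjoining the remaining equations $f_i^{\mathbf{t}}=0$ can only shrink this set while keeping $\mathbf{0}$ (as $F(\mathbf{0})=\mathbf{0}$ gives $f_i^{\mathbf{t}}(\mathbf{0})=0$), so all the $f_i^{\mathbf{t}}$ have no common nonzero zero. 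Hence it suffices to produce a weight $\tilde{\mathbf{s}}$ for which $\mathbf{0}$ is the unique zero of $H_{\tilde{\mathbf{s}}}$. Note also that each component $-(\partial_{x_j}H)^{\mathbf{s}}$ of $\mathscr{Y}_{\mathbf{s}}$ is a nonzero polynomial: otherwise $\partial_{x_j}H\equiv0$ and $\mathscr{Y}_{\mathbf{s}}$ would vanish on the whole $x_j$-axis, contradicting the hypothesis.

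Next I compare the leading part of the gradient with the gradient of the leading part. For each $j$ one has $\partial_{x_j}H=\partial_{x_j}H_{\mathbf{s}}+\partial_{x_j}(\text{lower }\mathbf{s}\text{-terms})$, and the second summand has $\mathbf{s}$-weighted degree strictly below $2d-s_j$; thus whenever $\partial_{x_j}H_{\mathbf{s}}\not\equiv0$ it \emph{is} the higher $\mathbf{s}$-quasi-homogeneous term of $\partial_{x_j}H$, so the $j$-th component of $\mathscr{Y}_{\mathbf{s}}$ equals $-\partial_{x_j}H_{\mathbf{s}}$. Call $j$ \emph{degenerate} when $\partial_{x_j}H_{\mathbf{s}}\equiv0$, which (the $(f_i^{\mathbf{s}})^2$ being squares) means no top-degree $f_i^{\mathbf{s}}$ involves $x_j$, i.e. $H_{\mathbf{s}}$ is independent of $x_j$. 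In the \emph{non-degenerate case}, where no coordinate is degenerate, the above gives $\mathscr{Y}_{\mathbf{s}}=-\partial_{\mathbf{x}}H_{\mathbf{s}}$, so the hypothesis says exactly that $\partial_{\mathbf{x}}H_{\mathbf{s}}$ vanishes only at $\mathbf{0}$. By Corollary \ref{co2} the origin is then the unique zero of $H_{\mathbf{s}}$, and the reduction of the first paragraph finishes the proof with $\tilde{\mathbf{s}}=\mathbf{s}$.

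The \emph{degenerate case} is the crux, and here $\mathscr{Y}_{\mathbf{s}}$ and $\partial_{\mathbf{x}}H_{\mathbf{s}}$ genuinely differ; in fact ``$\mathscr{Y}_{\mathbf{s}}$ vanishes only at $\mathbf{0}$'' is strictly weaker than ``$\partial_{\mathbf{x}}H_{\mathbf{s}}$ vanishes only at $\mathbf{0}$'', so Corollary \ref{co2} cannot be applied to $\mathbf{s}$ itself. My plan is to \emph{raise the weights of the degenerate coordinates}. For degenerate $j$ the nonzero component $-(\partial_{x_j}H)^{\mathbf{s}}$ is the true higher term of $\partial_{x_j}H$ and therefore detects monomials of $H$ that involve $x_j$ but lie strictly below the top weighted face; enlarging $s_j$ inflates precisely these monomials, and at suitable values $\tilde s_j$ (keeping $\tilde s_k=s_k$ on the active coordinates) they are promoted onto the top face, so $H_{\tilde{\mathbf{s}}}$ gains a genuine dependence on $x_j$ and that direction becomes active. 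Performing this for every degenerate coordinate—iterating, since enlarging weights changes $d$ and may create fresh degeneracies among previously active coordinates—should yield a weight $\tilde{\mathbf{s}}$ with no degenerate direction, at which point the non-degenerate case applies. For $n=2$ the picture is transparent via the Newton polygon of $H$: degeneracy forces the top $\mathbf{s}$-face to be a single vertex $c\,x_k^{2d/s_k}$ on one axis, while the nonvanishing of the other component of $\mathscr{Y}_{\mathbf{s}}$ forces the polygon to reach the other axis at an even lattice point, so some edge—selected by an appropriate $\tilde{\mathbf{s}}$—produces a positive definite leading form $H_{\tilde{\mathbf{s}}}$. The step I expect to be hardest is proving that this re-weighting \emph{cannot introduce a spurious nonzero common zero} of the new leading vector field (equivalently of $H_{\tilde{\mathbf{s}}}$): this is exactly where the full strength of the hypothesis on $\mathscr{Y}_{\mathbf{s}}$, rather than on $\partial_{\mathbf{x}}H_{\mathbf{s}}$, must enter, through a quasi-homogeneous curve-at-infinity analysis of the leading behaviour of $\partial_{\mathbf{x}}H=(DF)^{\top}F$ along each weighted ray, using $\det DF\neq0$ to convert ``$\partial_{\mathbf{x}}H\neq0$ at infinity'' into the required absence of nonzero zeros.
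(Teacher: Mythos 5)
You handle correctly, and essentially as the paper does, the easy parts: your sum-of-squares reduction (if $H_{\mathbf{t}}$ vanishes only at $\mathbf{0}$ then so does $F_{\mathbf{t}}$) is precisely the paper's inequality \eqref{eq24}, and your non-degenerate case, where $\mathscr{Y}_{\mathbf{s}}=-\partial_{\mathbf{x}}H_{\mathbf{s}}$ and Corollary \ref{co2} applies, is Case 1 ($r=1$) of the proof of Theorem \ref{th2}, on which the paper's proof of Theorem \ref{th13} rests. But the degenerate case is a genuine gap: what you offer there is a plan, not a proof, and the step you defer (``proving that this re-weighting cannot introduce a spurious nonzero common zero of $H_{\tilde{\mathbf{s}}}$'') is exactly the mathematical content of the theorem in that case. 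Moreover, your scheme --- raise the weights of the degenerate coordinates, then iterate because fresh degeneracies may appear --- comes with no termination argument, and your guess about what must close the gap (a curve-at-infinity analysis of $\partial_{\mathbf{x}}H=(DF)^{\top}F$ exploiting $\det DF\neq0$) points in the wrong direction: the paper proves this implication without ever using $\det DF\neq0$; it is a purely algebraic statement about the weighted Newton structure of the single non-negative polynomial $H$.

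For comparison, here is how the paper closes the case you left open (Case 2, $r>1$, in the proof of Theorem \ref{th2}). The coordinates are grouped into blocks $\mathbf{x}_1,\ldots,\mathbf{x}_r$ according to the distinct weighted degrees $2d=m_1>\cdots>m_r$ of the quasi-homogeneous terms $H_{m_i}$ whose partial derivatives occur in $\mathscr{Y}_{\mathbf{s}}$, and $\mathscr{H}_{m_i}\left(\mathbf{x}_i\right)$ denotes the part of $H_{m_i}$ depending only on $\mathbf{x}_i$, as in \eqref{eq46}. \textbf{Claim 1} shows by downward induction on the blocks that each $\mathscr{H}_{m_i}$ is positive definite: if $\mathscr{H}_{m_{i_0-1}}$ had a nonzero real zero, Corollary \ref{co2} would give a nonzero critical point $\mathbf{b}_{i_0-1}$, and then, using the induction hypothesis together with the coercivity supplied by Lemma \ref{le7}, one chooses successive global minimizers $\mathbf{b}_{i_0},\ldots,\mathbf{b}_r$ of the restricted polynomials $G_{m_{i_0}},\ldots,G_{m_r}$; the assembled point $\mathbf{b}=\left(\mathbf{0},\ldots,\mathbf{0},\mathbf{b}_{i_0-1},\ldots,\mathbf{b}_r\right)\neq\mathbf{0}$ is then a nonzero zero of $\mathscr{Y}_{\mathbf{s}}$, contradicting the hypothesis. \textbf{Claim 2} then verifies that for the single explicit weight $\tilde{\mathbf{s}}=\left(\frac{m}{m_1}\mathbf{s}_1,\ldots,\frac{m}{m_r}\mathbf{s}_r\right)$ with $m=\prod_i m_i$, given in \eqref{eq42} --- one rescaling per degeneracy level, performed all at once, so no iteration is needed --- the higher $\tilde{\mathbf{s}}$-quasi-homogeneous term of $H$ is exactly $\sum_{i=1}^r\mathscr{H}_{m_i}\left(\mathbf{x}_i\right)$, which is positive definite by Claim 1; your reduction (the paper's \eqref{eq24}) then yields the conclusion. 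Without an argument of the strength of Claim 1, your proposal does not establish the theorem.
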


Inspired by the above open problem, it is natural to ask the next problem.
\begin{problem}\label{p1}
What is the relationship between Theorem \ref{th5}, Theorem \ref{th10} and Theorem \ref{th2}?
\end{problem}

The following result and example will answer  the Problem \ref{p1}.
\begin{theorem}\label{th11}
	Theorem \ref{th10} and Theorem \ref{th2} are equivalent.
\end{theorem}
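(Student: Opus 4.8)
The plan is to read both hypotheses as \emph{existence} statements over the weight exponents and to prove they define the same class of maps: there is a weight $\mathbf{s}$ with $\partial_{\mathbf{x}}H_{\mathbf{s}}$ vanishing only at the origin if and only if there is a weight with the higher quasi-homogeneous part $\mathscr{Y}_{\mathbf{s}}$ of $\mathscr{Y}=-\partial_{\mathbf{x}}H$ vanishing only at the origin; since the two theorems share all other hypotheses and the same conclusion, this yields the claimed equivalence. The guiding observation is that the two conditions differ only in the order of two operations on $H$: Theorem \ref{th10} first extracts the higher $\mathbf{s}$-quasi-homogeneous term $H_{\mathbf{s}}$ and then differentiates, while Theorem \ref{th2} first differentiates ($\mathscr{Y}=-\partial_{\mathbf{x}}H$) and then extracts the higher $\mathbf{s}$-quasi-homogeneous part componentwise. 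Throughout I will use that $H=\parallel F\parallel_2^2/2\ge0$, hence $H_{\mathbf{s}}\ge0$, and I will invoke Corollary \ref{co2} to pass freely between ``$\partial_{\mathbf{x}}H_{\mathbf{s}}$ vanishes only at the origin'' and ``$H_{\mathbf{s}}$ vanishes only at the origin.''

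For the easy direction (Theorem \ref{th10} $\Rightarrow$ Theorem \ref{th2}, with the \emph{same} weight) I would show that extraction of the higher part commutes with differentiation once $H_{\mathbf{s}}$ is positive definite. If $H_{\mathbf{s}}$ vanishes only at the origin, then no $\partial_{x_j}H_{\mathbf{s}}$ can vanish identically: otherwise $H_{\mathbf{s}}$ would be independent of $x_j$ and hence vanish on the whole $x_j$-axis. Since $\partial_{x_j}$ lowers the $\mathbf{s}$-weighted degree by exactly $s_j$, the term $\partial_{x_j}H_{\mathbf{s}}$ (of weighted degree $d-s_j$) is then strictly the top part of $\partial_{x_j}H$, so the componentwise higher part of $\mathscr{Y}$ equals $\mathscr{Y}_{\mathbf{s}}=-\partial_{\mathbf{x}}H_{\mathbf{s}}$, which vanishes only at the origin. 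This supplies the required witness for Theorem \ref{th2}.

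The hard direction (Theorem \ref{th2} $\Rightarrow$ Theorem \ref{th10}) is the main obstacle, and here one cannot keep the same weight. Because $\mathscr{Y}_{\mathbf{s}}$ is assembled componentwise, its components may have \emph{mismatched} weighted degrees precisely when some $\partial_{x_j}H_{\mathbf{s}}\equiv0$; in that degenerate situation $\mathscr{Y}_{\mathbf{s}}$ can vanish only at the origin while $H_{\mathbf{s}}$ does not. For example $F=(\sqrt2\,x,y)$ with $\mathbf{s}=(2,1)$ gives $\mathscr{Y}_{\mathbf{s}}=(-2x,-y)$, vanishing only at the origin, yet $H_{\mathbf{s}}=x^2$ vanishes on the whole $y$-axis (while $\mathbf{s}=(1,1)$ does make $H_{\mathbf{s}}$ positive definite). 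The plan is therefore to \emph{construct a new weight} $\mathbf{s}_1$ removing all degeneracies. I would exploit the sum-of-squares structure: writing $\delta_i=\deg_{\mathbf{s}}f_i$, $\delta=\max_i\delta_i$ and $I=\{i:\delta_i=\delta\}$, one has $H_{\mathbf{s}}=\tfrac12\sum_{i\in I}(f_i^{\mathbf{s}})^2$ with no cancellation, so $x_j$ is degenerate for $H_{\mathbf{s}}$ exactly when every $f_i^{\mathbf{s}}$ with $i\in I$ is independent of $x_j$. The degeneracy thus records that $\mathbf{s}$ assigns unequal weighted degrees to the $f_i$, so only those in $I$ feed $H_{\mathbf{s}}$. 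I would then adjust the exponents so that the lower-degree $f_i$ are promoted into the top part, enlarging $I$ until $H_{\mathbf{s}_1}$ reduces to $\tfrac12\parallel F_{\mathbf{s}_1}\parallel_2^2$, and use the hypothesis on $\mathscr{Y}_{\mathbf{s}}$ to ensure that this enlarged collection of higher terms still has the origin as its only common zero.

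The heart of the matter — and the step I expect to be most delicate — is this weight equalization: one must produce positive-integer exponents $\mathbf{s}_1$ equalizing the $\deg_{\mathbf{s}_1}f_i$ (so that all components enter the top part) while preserving that the higher quasi-homogeneous terms vanish simultaneously only at the origin. I would approach it by tracking how the $\mathbf{s}$-faces of the Newton polytopes of the $f_i$, equivalently of $H$, move as the weight is deformed, choosing the deformation so that the lower-order square terms responsible for the surviving components $Y_k$ of $\mathscr{Y}_{\mathbf{s}}$ rise to the top; the non-vanishing of $\mathscr{Y}_{\mathbf{s}}$ off the origin is then to be transferred into positive-definiteness of $H_{\mathbf{s}_1}$. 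Once $H_{\mathbf{s}_1}$ vanishes only at the origin, Corollary \ref{co2} recovers the hypothesis of Theorem \ref{th10}, closing the equivalence.
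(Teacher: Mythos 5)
Your first implication is essentially correct and coincides with the paper's own argument for that half, though your labelling is reversed: showing that the hypothesis of Theorem \ref{th10} yields, \emph{for the same weight}, the hypothesis of Theorem \ref{th2} is what allows one to derive Theorem \ref{th10} from Theorem \ref{th2}, so this is the direction ``Theorem \ref{th2} implies Theorem \ref{th10}.'' The content matches the paper exactly: positivity of $H_{\mathbf{s}}$ off the origin (via Corollary \ref{co2}) forces $\partial_{x_j}H_{\mathbf{s}}\not\equiv 0$ for every $j$, hence the componentwise higher part of $\mathscr{Y}$ is $-\partial_{\mathbf{x}}H_{\mathbf{s}}$, which vanishes only at the origin.

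The other direction is a genuine gap: what you offer is a plan, not a proof, and you yourself flag the ``weight equalization'' as the step you expect to be most delicate. You never construct the new weight, never prove it exists, and never prove that the nonvanishing of $\mathscr{Y}_{\mathbf{s}}$ off the origin transfers to positive definiteness of the new higher term of $H$; the Newton-polytope deformation is named but not performed. Moreover, the target you set is problematic in two ways. First, equalizing the weighted degrees of the components $f_i$ (so that $H_{\mathbf{s}_1}=\tfrac12\parallel F_{\mathbf{s}_1}\parallel_2^2$) can be structurally impossible: for components $f_1=x$ and $f_2=y+x^2$ one has $\deg_{\mathbf{s}}f_2=\max(s_2,2s_1)\geq 2s_1>s_1=\deg_{\mathbf{s}}f_1$ for \emph{every} admissible weight, so your plan would additionally require proving that such obstructions cannot occur under the hypothesis of Theorem \ref{th2} --- a point you do not address. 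Second, your diagnosis that degeneracy of $H_{\mathbf{s}}$ ``records unequal weighted degrees of the $f_i$'' is false: for $F=\left(y+x^3,x+x^3\right)$ (which satisfies $F(\mathbf{0})=\mathbf{0}$ and $\det DF=-(1+3x^2)\neq 0$) with $\mathbf{s}=(1,1)$ the two components have equal weighted degree $3$, yet $H_{\mathbf{s}}=x^6$ is degenerate in $y$; so equalizing degrees does not by itself remove degeneracies. The paper's proof of this direction is in fact the entire proof of Theorem \ref{th2}: the \emph{variables} (not the components $f_i$) are grouped into blocks $\mathbf{x}_1,\ldots,\mathbf{x}_r$ according to the weighted degrees $m_1>\cdots>m_r$ of the terms $H_{m_i}$ appearing in $\mathscr{Y}_{\mathbf{s}}$; a downward induction (Claim 1, using Lemma \ref{le4}, Corollary \ref{co2}, Lemma \ref{le7} and global minimization of the restricted polynomials) shows each diagonal piece $\mathscr{H}_{m_i}\left(\mathbf{x}_i\right)$ is positive definite; and the blockwise rescaled weight $\mathbf{\tilde{s}}=\left(\frac{m}{m_1}\mathbf{s}_1,\ldots,\frac{m}{m_r}\mathbf{s}_r\right)$ of \eqref{eq42}, with $m=\prod_i m_i$, makes (Claim 2) the higher $\mathbf{\tilde{s}}$-term of $H$ equal to $\sum_i\mathscr{H}_{m_i}\left(\mathbf{x}_i\right)$, which vanishes only at the origin. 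This equalizes the degrees of pieces of $H$, not of the $f_i$, and thereby sidesteps both objections above. Without an argument of this kind, your equivalence is only half proved.
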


\begin{example}\label{ex1}{\rm (see \cite{MR3448779})}
\emph{Consider the  polynomial map $F=(f,g)=\left(x^3+y^3+x,y\right)$ with $\det DF=3x^2+1$.  The higher homogeneous  part of $F$ is $\left(x^3+y^3,y\right)$, which only vanishes at the origin. The higher $\mathbf{s}$-quasi-homogeneous term of
$$H=\frac{f^2+g^2}{2}=\frac{x^2}{2}+x^4+\frac{x^6}{2}+\frac{y^2}{2}+x y^3+x^3y^3+\frac{y^6}{2}$$
is $y^6/2$, $(x^3+y^3)^2/2$ and $x^6/2$ depending on $s_1<s_2$, $s_1=s_2$ and $s_1>s_2$, respectively. None of them  only vanishe at the origin in $\mathbb{R}^2$.}
\end{example}
\noindent Example \ref{ex1} shows that  Theorem \ref{th10} or Theorem \ref{th2} does not imply Theorem \ref{th5}. The proof of Theorems \ref{th13} and \ref{th11} will be given in Section \ref{se6}.


 \section{A dynamical proof of Theorem \ref{th3}}\label{se4}
 The main  aim of this section is to give an alternate proof of Theorem \ref{th3} via the method of the theory of dynamical systems.

Let $X=\left(P_1\left(\mathbf{x}\right),\ldots,P_n\left(\mathbf{x}\right)\right)$ be a polynomial vector field of degree $d$. The dynamical behavior of the flows of vector field $X$ near infinity can be characterized by \emph{Poincar\'{e} compactification}. Briefly speaking, the \emph{Poincar\'{e} compactification of vector field $X$} means that it can be extended to an analytic vector field $p\left(X\right)$ in the closed
unit ball $\mathbb{D}^n=\left\{\mathbf{x}\;|\parallel \mathbf{x}\parallel_2\leq1,\;\mathbf{x}\in\mathbb{R}^n\right\}$, and the flow of $p\left(X\right)$ is analytically conjugate to  the flow of $X$ in the interior of $\mathbb{D}^n$.  The boundary of $\mathbb{D}^n$ is a unit sphere $\mathbb{S}^{n-1}=\left\{\mathbf{x}\;|\parallel \mathbf{x}\parallel_2=1,\;\mathbf{x}\in\mathbb{R}^n\right\}$. Denote the restriction of  $p\left(X\right)$ to the unit sphere $\mathbb{S}^{n-1}$ by $p\left(X\right)|_{\mathbb{S}^{n-1}}$.   It is known that the dynamical behavior of the flows of vector field $X$ near infinity  are given by the vector field $p\left(X\right)|_{\mathbb{S}^{n-1}}$. For more details about Poincar\'{e}  compactification in $\mathbb{R}^n$, we refer the reader to \cite{MR998352}. Throughout the rest of this paper, all mentioned the infinity of a vector field is refer to the boundary $\mathbb{S}^{n-1}$ of $\mathbb{D}^{n}$.

Let $q\in\mathbb{R}^n$ be an isolated singular point of  the vector field $X$, and $U_q$ be a neighborhood  of $q$ such that  $X\left(\mathbf{x}\right)\neq\mathbf{0}$ for all $\mathbf{x}\in U_q\setminus\left\{q\right\}$. Consider the following map
 \begin{align*}
 \xi:\partial U_q&\rightarrow \quad\mathbb{S}^{n-1}\\
 \mathbf{x}&\mapsto\dfrac{X\left(\mathbf{x}\right)}{\parallel X\left(\mathbf{x}\right)\parallel}.
 \end{align*}
The \emph{index} of $X$ at singular point  $q$ is defined by the degree of the map $\xi$ and it is  denoted by $i_{X}\left(q\right)$, see \cite{MR0226651} for more details.

 The following result was proved in \cite{MR4066015}.
\begin{theorem}\label{th4}{\rm (see  \cite{MR4066015})}
	Assume that the polynomial vector field $X$  in $\mathbb{R}^n$ only  has finitely many finite singular points $q_1,\ldots,q_l$.  If the infinity of $X$ (i.e. the boundary $\mathbb{S}^{n-1}$ of $\mathbb{D}^n$)  is collapsed to a point, which is a repeller, then
	$$\sum_{j=1}^li_{X}\left(q_j\right)=\left(-1\right)^n.$$
\end{theorem}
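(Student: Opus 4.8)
The plan is to deduce the formula from the Poincar\'e--Hopf index theorem applied to the closed sphere $\mathbb{S}^n$ obtained as the one-point compactification of $\mathbb{R}^n$, where the single point at infinity is exactly the collapsed boundary $\mathbb{S}^{n-1}$. First I would pass to the Poincar\'e compactification $p(X)$ on the closed ball $\mathbb{D}^n$. Since the flow of $p(X)$ is analytically conjugate to that of $X$ on $\mathrm{int}(\mathbb{D}^n)$, the finite singular points $q_1,\dots,q_l$ and their indices $i_X(q_j)$ are preserved. Invoking the hypothesis that the invariant boundary $\mathbb{S}^{n-1}$ collapses to a single repelling point $N$, I would form the quotient $\mathbb{D}^n/\mathbb{S}^{n-1}\cong\mathbb{S}^n$ and check that $p(X)$ descends to a continuous vector field $\widetilde{X}$ on $\mathbb{S}^n$ whose singular set is precisely $\{q_1,\dots,q_l,N\}$, with $N$ the point at infinity.

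The heart of the argument is the index of the collapsed repeller $N$. Being a repeller, $N$ is an asymptotically stable equilibrium for the reversed field $-\widetilde{X}$, whose index equals $(-1)^n$ by the classical computation for asymptotically stable rest points (equivalently, the inward radial model $x\mapsto -x$ has normalized field the antipodal map of $\mathbb{S}^{n-1}$, of degree $(-1)^n$). Reversing the field composes the normalized Gauss map with this antipodal map, so $i_{\widetilde{X}}(N)=(-1)^n\cdot(-1)^n=1$. Now I would apply the Poincar\'e--Hopf theorem on the closed manifold $\mathbb{S}^n$, using $\chi(\mathbb{S}^n)=1+(-1)^n$, to obtain
\[
\sum_{j=1}^{l}i_X(q_j)+i_{\widetilde{X}}(N)=1+(-1)^n,
\]
and substituting $i_{\widetilde{X}}(N)=1$ yields $\sum_{j=1}^{l}i_X(q_j)=(-1)^n$, as claimed.

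The step I expect to be the main obstacle is the rigorous justification of the collapse and of the index of $N$. Because the boundary sphere is $p(X)$-invariant, the field is tangent to it and vanishes nowhere transversally, so one cannot simply apply the boundary version of Poincar\'e--Hopf; instead one must verify that quotienting $\mathbb{S}^{n-1}$ to $N$ produces a field that is genuinely continuous with an isolated singularity at $N$, and that this $N$ is a topological repeller even when it is degenerate (non-hyperbolic). A concrete way to control this is to bypass the quotient and compute directly: since infinity repels, for a large sphere $S_R\subset\mathbb{R}^n$ enclosing all $q_j$ one expects the field to point inward, i.e. $\langle X(x),x\rangle<0$ on $S_R$; then the straight-line homotopy shows $x\mapsto X(x)/\|X(x)\|$ is homotopic to the antipodal map $x\mapsto -x/\|x\|$ on $S_R$, whose degree $(-1)^n$ equals $\sum_j i_X(q_j)$. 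Making the inward-pointing estimate precise from the repelling hypothesis (choosing the correct radius or Lyapunov level set) is the delicate point.
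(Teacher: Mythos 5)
The paper itself gives no proof of Theorem \ref{th4}: it is quoted as a known result from \cite{MR4066015}, so the only meaningful comparison is with the argument in that reference --- which is essentially what you reconstruct. Your main line is correct: pass to the Poincar\'e compactification, collapse the invariant boundary to a point $N$ so that $\mathbb{D}^n/\mathbb{S}^{n-1}\cong\mathbb{S}^n$ carries a continuous induced field whose zeros are exactly $q_1,\dots,q_l$ and $N$; the repelling hypothesis forces $i(N)=+1$ (a repeller is an asymptotically stable rest point of the time-reversed flow, whose index is $(-1)^n$ even in the degenerate non-hyperbolic case, by the Krasnosel'skii--Zabreiko theorem, and negating the field multiplies the index by $(-1)^n$); then Poincar\'e--Hopf on the closed manifold $\mathbb{S}^n$, valid for continuous fields with isolated zeros, gives $\sum_j i_X(q_j)+1=\chi(\mathbb{S}^n)=1+(-1)^n$. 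The two technical points you flag are genuine but standard: continuity of the descended field at $N$ holds because $p(X)$ is bounded on the compact ball and tangent to the invariant boundary, while the differential of the quotient map annihilates directions tangent to $\mathbb{S}^{n-1}$; isolation of $N$ as a zero follows from the finiteness of the $q_j$; and the index computation at $N$ uses only the quotient flow, which is a genuine continuous flow on $\mathbb{S}^n$.

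The one place where your text would go wrong if taken literally is the proposed ``concrete bypass'': repulsion of infinity does \emph{not} imply $\langle X(x),x\rangle<0$ on all sufficiently large round spheres --- the field may be tangent to, or cross outward through, every sphere $S_R$ at some points (think of slowly inward-spiralling flows with radial oscillation), so the inward-pointing estimate is false in general. The correct surrogate is a level set of a Lyapunov function for the repeller, but then equating the degree of the boundary Gauss map with $(-1)^n$ requires controlling the topology of the region that level set bounds, which is exactly the work the quotient-plus-Poincar\'e--Hopf route avoids. So keep your first argument as the proof and drop the sphere estimate, or state it only under the stronger hypothesis of transversality to large spheres.
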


The next result can be found in Lemma 4 of page 37 of book \cite{MR0226651}.
\begin{lemma}\label{le3}{\rm (see  \cite{MR0226651})}
Let $q\in\mathbb{R}^n$ be a  singular point  of polynomial vector field $X$ with $\det DX\left(q\right)\neq0$.  Then the index of $X$ at singular point  $q$ is $i_{X}\left(q\right)=\emph{sign}\;\det DX\left(q\right)$.
\end{lemma}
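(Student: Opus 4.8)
The plan is to reduce the computation of the index to the linear case by a homotopy argument and then to evaluate the degree of the normalized linear map on the sphere. First I would translate coordinates so that $q=\mathbf{0}$ and write $X(\mathbf{x})=A\mathbf{x}+R(\mathbf{x})$, where $A=DX(\mathbf{0})$ and $R(\mathbf{x})=X(\mathbf{x})-A\mathbf{x}$ satisfies $R(\mathbf{x})=o(\|\mathbf{x}\|)$ as $\mathbf{x}\to\mathbf{0}$, since $X$ is differentiable at the origin. Because $\det A\neq 0$, the matrix $A$ is a linear isomorphism, so there is a constant $m>0$ with $\|A\mathbf{x}\|\geq m\|\mathbf{x}\|$ for all $\mathbf{x}$. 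Choosing $\varepsilon>0$ small enough that $\|R(\mathbf{x})\|<m\|\mathbf{x}\|$ for $0<\|\mathbf{x}\|\leq\varepsilon$, the sphere $\|\mathbf{x}\|=\varepsilon$ bounds a neighborhood on which $X$ has no zero other than the origin, and I would take this sphere as the boundary $\partial U_q$ appearing in the definition of $i_X(q)$.

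Next I would deform $X$ onto its linear part. Consider the homotopy $H_t(\mathbf{x})=A\mathbf{x}+tR(\mathbf{x})$, $t\in[0,1]$. On the sphere $\|\mathbf{x}\|=\varepsilon$ one has $\|H_t(\mathbf{x})\|\geq\|A\mathbf{x}\|-t\|R(\mathbf{x})\|\geq m\varepsilon-\|R(\mathbf{x})\|>0$, so $H_t$ never vanishes on the sphere for any $t$. Hence the normalized maps $\mathbf{x}\mapsto H_t(\mathbf{x})/\|H_t(\mathbf{x})\|$ form a homotopy from $\xi$ (attached to $X=H_1$) to the normalized linear map $\mathbf{x}\mapsto A\mathbf{x}/\|A\mathbf{x}\|$ (attached to $H_0$). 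By homotopy invariance of the topological degree, $i_X(q)$ equals the degree of the normalized linear map, that is, the index of the linear field $A\mathbf{x}$ at the origin.

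It then remains to compute the degree of $g\colon\mathbb{S}^{n-1}\to\mathbb{S}^{n-1}$, $g(\mathbf{x})=A\mathbf{x}/\|A\mathbf{x}\|$, for an invertible $A$. Here I would invoke the fact that $GL(n,\mathbb{R})$ has exactly two connected components, distinguished by the sign of the determinant. If $\det A>0$, a continuous path of invertible matrices joins $A$ to the identity, and normalizing along this path homotopes $g$ to the identity map of $\mathbb{S}^{n-1}$, whose degree is $+1$. If $\det A<0$, the same argument joins $A$ to a reflection such as $\mathrm{diag}(-1,1,\ldots,1)$, whose normalized map is that reflection, of degree $-1$. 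In both cases $\deg g=\mathrm{sign}\,\det A=\mathrm{sign}\,\det DX(q)$, which is the asserted value of $i_X(q)$.

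The main obstacle is the bookkeeping in the two homotopy steps: one must verify that neither the straight-line deformation $H_t$ nor the matrix path used to evaluate the linear degree produces a zero on the chosen sphere. Both non-vanishing statements rely crucially on the nondegeneracy hypothesis $\det DX(q)\neq 0$, which supplies the lower bound $\|A\mathbf{x}\|\geq m\|\mathbf{x}\|$ and guarantees that the connecting path stays inside $GL(n,\mathbb{R})$. Once these are secured, homotopy invariance of the degree together with the two-component structure of $GL(n,\mathbb{R})$ completes the proof.
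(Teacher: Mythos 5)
Your proof is correct and complete. Be aware, however, that the paper contains no proof of this lemma at all: it is quoted verbatim from Milnor's book (Lemma~4, p.~37 of \cite{MR0226651}), so the only meaningful comparison is with Milnor's own argument, which takes a somewhat different route. Milnor observes that, since $\det DX(q)\neq 0$, the field $X$ is a diffeomorphism of a convex neighborhood of $q$ onto its image, and he then invokes the fact that an orientation-preserving diffeomorphism can be smoothly deformed (isotoped) to the identity without introducing zeros, which yields index $+1$ in the orientation-preserving case and $-1$ otherwise. Your argument replaces that isotopy lemma with two more elementary ingredients: the straight-line homotopy $H_t(\mathbf{x})=A\mathbf{x}+tR(\mathbf{x})$, kept nonvanishing on a small sphere by the bound $\|A\mathbf{x}\|\geq m\|\mathbf{x}\|$ together with $R(\mathbf{x})=o\left(\|\mathbf{x}\|\right)$, and the path-connectedness of the two components of $GL(n,\mathbb{R})$ to evaluate the degree of the normalized linear map. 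Both proofs are rigorous; yours is self-contained at the level of homotopy invariance of the Brouwer degree plus linear algebra, while Milnor's is shorter but leans on the nontrivial isotopy result. All the delicate points in your version (nonvanishing of $H_t$ on the chosen sphere, and of $A(t)\mathbf{x}$ along the matrix path) are correctly identified and justified by the nondegeneracy hypothesis, so there is no gap.
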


\begin{lemma}\label{le2}
Assume that the polynomial map $F=\left(f_1,\ldots,f_n\right):\mathbb{R}^n\rightarrow \mathbb{R}^n$ satisfies  $F\left(\mathbf{0}\right)=\mathbf{0}$ and $\det DF\left(\mathbf{x}\right)\neq0$ for all $\mathbf{x}\in\mathbb{R}^n$.  Let $H\left(\mathbf{x}\right)=\parallel F\left(\mathbf{x}\right)\parallel_2^2/2$. Then $q\in\mathbb{R}^n$ is a singular point of the gradient vector field defined by
\begin{align}\label{eq9}
&\mathscr{Y}=\left(-\partial_{x_1}H,\cdots,-\partial_{x_n}H\right)
\end{align}
if and only if $F\left(q\right)=\mathbf{0}$. Moreover, the index of $\mathscr{Y}$ at singular point  $q$ is $\left(-1\right)^n$.
\end{lemma}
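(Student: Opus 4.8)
The plan is to rewrite $\mathscr{Y}$ through the Jacobian matrix and then exploit $\det DF\neq0$. Treating $F$ as a column vector and recalling that $(DF)_{ij}=\partial_{x_j}f_i$, a one-line computation gives $\partial_{x_j}H=\sum_{i=1}^n f_i\,\partial_{x_j}f_i$, so that $\partial_{\mathbf{x}}H=(DF)^{T}F$ and hence $\mathscr{Y}=-(DF)^{T}F$. For the first assertion, $q$ is a singular point of $\mathscr{Y}$ exactly when $(DF(q))^{T}F(q)=\mathbf{0}$; since $\det DF(q)\neq0$ the matrix $(DF(q))^{T}$ is invertible, which forces $F(q)=\mathbf{0}$, and the converse is immediate. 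This settles the equivalence.

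For the index I would pass to the Hessian of $H$, using $D\mathscr{Y}=-\operatorname{Hess}(H)$. Differentiating once more,
$$\partial_{x_k}\partial_{x_j}H=\sum_{i=1}^n(\partial_{x_k}f_i)(\partial_{x_j}f_i)+\sum_{i=1}^n f_i\,\partial_{x_k}\partial_{x_j}f_i.$$
The crucial observation is that at a singular point $q$ one has $F(q)=\mathbf{0}$, i.e.\ every $f_i(q)=0$, so the second sum vanishes at $q$. Hence $\operatorname{Hess}(H)(q)=(DF(q))^{T}DF(q)$, and therefore $D\mathscr{Y}(q)=-(DF(q))^{T}DF(q)$.

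It then remains to take determinants and invoke Lemma \ref{le3}. By multiplicativity of the determinant,
$$\det D\mathscr{Y}(q)=(-1)^n\det\!\big((DF(q))^{T}DF(q)\big)=(-1)^n\big(\det DF(q)\big)^2.$$
Because $\det DF(q)\neq0$, this is nonzero with sign $(-1)^n$, so Lemma \ref{le3} applies and gives $i_{\mathscr{Y}}(q)=\operatorname{sign}\det D\mathscr{Y}(q)=(-1)^n$. There is no serious obstacle in this argument; the only step requiring care is verifying that the second-order terms of the Hessian drop out at $q$ — which is exactly where the hypothesis $F(q)=\mathbf{0}$ enters — together with the observation that $(DF(q))^{T}DF(q)$ has strictly positive determinant $(\det DF(q))^2$, so that the sign of $\det D\mathscr{Y}(q)$ is unambiguously $(-1)^n$.
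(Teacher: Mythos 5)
Your proof is correct and takes essentially the same route as the paper: write the gradient as a matrix product $\mathscr{Y}=-(DF)^{T}F$, use invertibility of $DF(q)$ to get the equivalence with $F(q)=\mathbf{0}$, compute $\det D\mathscr{Y}(q)=(-1)^n\left(\det DF(q)\right)^2$, and invoke Lemma \ref{le3}. You in fact supply a detail the paper glosses over (and silently correct its missing transpose in equation \eqref{eq7}): the explicit Hessian computation showing that the second-order terms drop out at $q$ precisely because $F(q)=\mathbf{0}$, which justifies the determinant formula the paper merely asserts.
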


\begin{proof}
The gradient vector field $\mathscr{Y}$ can be expressed as
\begin{align}\label{eq7}
	& -\left(
	\begin{array}{c}
	\partial_{x_1}H\\
		\vdots\\
		\partial_{x_n}H\\
	\end{array}
	\right)=
	-DF
	\left(
	\begin{array}{c}
		f_1\\
		\vdots\\
		f_n\\
	\end{array}
	\right).
\end{align}
Therefore, $q$ is a singular point of $\mathscr{Y}$  if and only if  it satisfies
\begin{align}\label{eq8}
	&
	DF\left(q\right)
	\left(
	\begin{array}{c}
		f_1\left(q\right)\\
		\vdots\\
		f_n\left(q\right)\\
	\end{array}
	\right)
	=
	\left(
	\begin{array}{c}
		0\\
		\vdots\\
		0\\
	\end{array}
	\right).
\end{align}
Equation \eqref{eq8}  can be regard as a linear system with unknowns $f_1\left(q\right),\ldots,f_n\left(q\right)$.  Since $\text{det} D F\left(q\right)\neq0$,
the  coefficient matrix  of linear system \eqref{eq8}  is  invertible.   So, $q\in\mathbb{R}^n$ is a singular point of $\mathscr{Y}$  if and only if  $F\left(q\right)=\mathbf{0}$.

The determinant of the  matrix $D\mathscr{Y}$ at singular point  $q$ is given by
$$\det D\mathscr{Y} \left(q\right)=\left(-1\right)^n\left(\det DF\left(q\right)\right)^2.$$
Applying Lemma \ref{le3},  the index of $\mathscr{Y}$ at singular point  $q$ is $\left(-1\right)^n$ due to $\det DF\left(q\right)\neq0$ and this lemma  is proved.
\end{proof}

For every polynomial $V\left(\mathbf{x}\right)\in\mathbb{R}\left[\mathbf{x}\right]$, its associated gradient vector field is
\begin{align}\label{eq55}
	&X=\left(-\partial_{x_1}V,\ldots,-\partial_{x_n}V\right).
\end{align}

The asymptotic behavior of flows of gradient vector field \eqref{eq55} are characterized in the  following proposition,  see page 206 of \cite{MR2144536}.
\begin{proposition}\label{pr2}{\rm (see  \cite{MR2144536})}
	Assume that the gradient vector field \eqref{eq55}  only  has isolated finite singular points.  Let the flow $\phi\left(t,p\right)$ generated by vector field \eqref{eq55}  with $\phi\left(0,p\right)=p\in\mathbb{R}^n$. Then the flow $\phi\left(t,p\right)$ either tends to the infinity of $\mathbb{R}^n$ (i.e. the boundary $\mathbb{S}^{n-1}$ of $\mathbb{D}^{n}$), or a singular point of vector field \eqref{eq55} as $t\rightarrow\pm\infty$.
\end{proposition}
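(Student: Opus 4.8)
\section*{Proof proposal for Proposition \ref{pr2}}

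The plan is to treat $X=-\partial_{\mathbf{x}}V$ as a gradient system and to exploit that $V$ is a (weak) Lyapunov function. Along any trajectory of \eqref{eq55} one computes $\tfrac{d}{dt}V\left(\phi(t,p)\right)=\partial_{\mathbf{x}}V\cdot\left(-\partial_{\mathbf{x}}V\right)=-\|\partial_{\mathbf{x}}V\|_2^2\le0$, so $V$ is non-increasing in forward time and strictly decreasing except at singular points. To avoid the difficulty that a polynomial flow may escape to infinity in finite time, I would work inside the Poincar\'{e} compactification: the flow $p(X)$ on the compact ball $\mathbb{D}^n$ is complete, its interior trajectories are time-reparametrizations of those of $X$, and the interior $\mathbb{R}^n$ and the boundary $\mathbb{S}^{n-1}$ are each invariant. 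Because the reparametrization only multiplies $X$ by a positive factor $\rho>0$, one still has $\tfrac{d}{d\tau}V=-\rho\,\|\partial_{\mathbf{x}}V\|_2^2\le0$, so $V$ remains non-increasing along the interior flow of $p(X)$ and its singular points are unchanged. Consequently the forward limit set $\omega(p)\subseteq\mathbb{D}^n$ is non-empty, compact, connected and invariant.

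Next I would analyse the finite part $\omega_f(p):=\omega(p)\cap\mathbb{R}^n$. Since $V\left(\phi(t,p)\right)$ is monotone it converges to some $c\in[-\infty,V(p)]$; if $q\in\omega_f(p)$, then $V\left(\phi(t_k,p)\right)\to V(q)$ along a suitable sequence $t_k\to\infty$, forcing $c=V(q)$ to be finite. By continuity, $V\equiv c$ on all of $\omega_f(p)$. As $\omega_f(p)$ is invariant, along any trajectory lying in it we have $0=\tfrac{d}{dt}V=-\|\partial_{\mathbf{x}}V\|_2^2$, so $\partial_{\mathbf{x}}V\equiv\mathbf{0}$ on $\omega_f(p)$; that is, every point of $\omega_f(p)$ is a singular point of \eqref{eq55}.

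Now the hypothesis that the singular points are isolated finishes the dichotomy. The set $\omega_f(p)$ consists of isolated singular points lying in the interior of $\mathbb{D}^n$, so no boundary point can accumulate on them; hence each point of $\omega_f(p)$ is isolated in $\omega(p)$. If $\omega_f(p)\neq\emptyset$, connectedness of $\omega(p)$ then forces $\omega(p)$ to be a single point, necessarily a finite singular point, and $\phi(t,p)$ converges to it. If instead $\omega_f(p)=\emptyset$, then $\omega(p)\subseteq\mathbb{S}^{n-1}$, and compactness of $\mathbb{D}^n$ gives $\operatorname{dist}\bigl(\phi(t,p),\mathbb{S}^{n-1}\bigr)\to0$, i.e. $\|\phi(t,p)\|_2\to\infty$: the flow tends to the infinity of $\mathbb{R}^n$.

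For $t\to-\infty$ the argument is identical after replacing $\omega(p)$ by the $\alpha$-limit set and noting that $V$ is non-decreasing along backward trajectories, i.e. a Lyapunov function for the reversed field $-X=\partial_{\mathbf{x}}V$. The main obstacle, and the only place the hypotheses are essential, is excluding a \emph{mixed} limit set containing both an interior singular point and boundary points: this is resolved precisely by combining the connectedness of $\omega(p)$ with the isolatedness of the singular points, while passing to the compactification is what legitimises speaking of the limit set at all when the original trajectory leaves every compact subset of $\mathbb{R}^n$.
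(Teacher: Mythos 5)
Your proof is correct. For comparison: the paper does not prove Proposition \ref{pr2} at all --- it simply cites page 206 of \cite{MR2144536}, where the result established is that every $\alpha$- or $\omega$-limit point of a gradient flow is an equilibrium. Your argument reproduces exactly that Lyapunov/LaSalle mechanism ($V$ monotone along orbits, hence constant on the invariant limit set, hence $\partial_{\mathbf{x}}V\equiv\mathbf{0}$ there), and then adds the two ingredients that the textbook statement does not supply but that the proposition as stated here needs: (i) the Poincar\'{e} compactification, which guarantees a non-empty, compact, connected limit set even when the orbit escapes every compact subset of $\mathbb{R}^n$ --- possibly in finite time, which genuinely can happen for polynomial gradient fields --- and thereby makes the alternative ``tends to the infinity of $\mathbb{R}^n$'' meaningful; and (ii) the combination of connectedness of $\omega(p)$ with isolatedness of the singular points, which excludes a mixed limit set and upgrades ``limit points are equilibria'' to convergence to a single equilibrium. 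This is precisely the form in which the paper later uses the proposition (in the proof of Proposition \ref{pr1}). One step you leave implicit but should record explicitly: an interior orbit of the compactified field $p\left(X\right)$ can reach neither the invariant boundary sphere nor an interior singular point in finite time (by uniqueness of solutions), so the reparametrized time runs over an infinite interval and the dichotomy proved for $p\left(X\right)$ does transfer back to $X$ as $t\rightarrow\pm\infty$, understood along the maximal interval of existence when there is finite-time escape. This is routine and does not affect correctness.
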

%
%
%

\begin{proposition}\label{pr1}
Assume that the polynomial map $F=\left(f_1,\ldots,f_n\right):\mathbb{R}^n\rightarrow \mathbb{R}^n$ satisfies  $F\left(\mathbf{0}\right)=\mathbf{0}$ and $\det DF\left(\mathbf{x}\right)\neq0$ for all $\mathbf{x}\in\mathbb{R}^n$. Let  $H\left(\mathbf{x}\right)=\parallel F\left(\mathbf{x}\right)\parallel_2^2/2$. Consider the following  gradient vector field
\begin{align}\label{eq10}
	&\mathscr{Y}=\left(-\partial_{x_1}H,\cdots,-\partial_{x_n}H\right).
\end{align}
If
\begin{align}\label{eq11}
	&\lim_{\parallel \mathbf{x}\parallel\rightarrow+\infty}\parallel F\left(\mathbf{x}\right)\parallel=+\infty,
\end{align}
then the origin is the unique singular point of  vector field \eqref{eq10}.
\end{proposition}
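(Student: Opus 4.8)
The plan is to convert the statement into an index count for the gradient field $\mathscr{Y}$ in \eqref{eq10} and then invoke Theorem \ref{th4}. By Lemma \ref{le2} the singular points of $\mathscr{Y}$ are exactly the zeros of $F$, every such point has index $(-1)^n$, and since $F(\mathbf{0})=\mathbf{0}$ the origin is one of them. Hence it is enough to show that $\mathscr{Y}$ has a single singular point: if $q_1,\dots,q_l$ denote all of them, then combining the index identity $\sum_{j=1}^{l}i_{\mathscr{Y}}(q_j)=(-1)^n$ with $i_{\mathscr{Y}}(q_j)=(-1)^n$ gives $l\,(-1)^n=(-1)^n$, so $l=1$ and the unique singular point is the origin.

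To make Theorem \ref{th4} applicable I would first check finiteness of the singular set. At each zero $q$ of $F$ one has $\det DF(q)\neq0$, so $F$ is a local diffeomorphism near $q$ and $q$ is isolated in $F^{-1}(\mathbf{0})$; moreover \eqref{eq11} forces $F^{-1}(\mathbf{0})$ to lie in a bounded set, so it is compact, and a compact set of isolated points is finite. Next I would exploit the Lyapunov structure of the gradient flow: along orbits of $\mathscr{Y}=-\nabla H$ one has $\dot H=-\|\nabla H\|_2^2\le0$, and by \eqref{eq11} the polynomial $H(\mathbf{x})=\|F(\mathbf{x})\|_2^2/2$ is proper, i.e. its sublevel sets are compact. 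Therefore every forward orbit stays in the compact sublevel set through its initial point, hence is bounded, and Proposition \ref{pr2} forces it to converge to a singular point instead of escaping to infinity. Fixing a large $c_0$, the compact region $\{H\le c_0\}$ is forward invariant and absorbs every orbit, so a neighborhood of the boundary $\mathbb{S}^{n-1}$ is left in forward time; this is the dynamical meaning of the statement that the infinity of $\mathscr{Y}$ is a repeller. Granting that this repelling infinity collapses to a point, Theorem \ref{th4} yields $\sum_{j=1}^{l}i_{\mathscr{Y}}(q_j)=(-1)^n$, and the count above gives $l=1$.

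The main obstacle is precisely the verification of the hypothesis of Theorem \ref{th4}, namely that in the Poincar\'{e} compactification the whole boundary $\mathbb{S}^{n-1}$ collapses to a single repelling point. Coercivity together with the monotonicity of $H$ supplies the robust part, that no forward orbit reaches or accumulates at infinity, so infinity repels; the delicate point is to rule out nontrivial recurrent behaviour of $p(\mathscr{Y})|_{\mathbb{S}^{n-1}}$ on the sphere at infinity, so that the boundary genuinely behaves as one point rather than as a nontrivial invariant set. I would emphasize that one cannot shortcut this by testing the inward-pointing condition $\langle\mathscr{Y}(\mathbf{x}),\mathbf{x}\rangle<0$ on a large sphere, since a coercive $H$ may still fail to be radially increasing along a curved valley tending to infinity; the compactification argument, which uses only that forward orbits flow away from infinity, is the correct robust substitute.
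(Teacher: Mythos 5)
Your overall strategy is the paper's: Lemma \ref{le2} identifies the singular points of $\mathscr{Y}$ with the zeros of $F$ and gives each index $(-1)^n$, Theorem \ref{th4} gives $\sum_{j}i_{\mathscr{Y}}(q_j)=(-1)^n$, hence $l=1$; your finiteness argument and the forward-time Lyapunov argument also match the paper (the finiteness step is in fact more explicit than in the paper, which tacitly assumes it when invoking Theorem \ref{th4}). The genuine gap is that you never verify the hypothesis of Theorem \ref{th4}: you write ``granting that this repelling infinity collapses to a point'' and then declare that verification to be the unresolved ``main obstacle''. Moreover, you misidentify what the hypothesis asks for. Collapsing $\mathbb{S}^{n-1}$ to a point is a purely topological quotient, always available since the sphere at infinity is invariant for $p(\mathscr{Y})$; nothing about the dynamics of $p(\mathscr{Y})|_{\mathbb{S}^{n-1}}$ --- recurrent or not --- has to be ruled out, because the entire sphere becomes a single fixed point of the quotient flow. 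What must be checked is that this quotient fixed point is a \emph{repeller}, i.e.\ backward asymptotic stability; forward escape of orbits from a neighborhood of infinity, which is what you actually prove, is not the definition of that property.

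The missing step is exactly the backward-time half of the paper's argument, and your own ingredients close it in two lines. For a nonsingular point $p$ one has $H(p)>0$ (by Lemma \ref{le2}, $H$ vanishes precisely at the singular points of $\mathscr{Y}$), and $H$ strictly increases along the orbit in backward time; by Proposition \ref{pr2} the backward orbit tends either to infinity or to a finite singular point $q$, but the latter would force $\lim_{t\to-\infty}H\left(\varphi\left(t,p\right)\right)=H\left(q\right)=0<H\left(p\right)$, a contradiction. Hence every backward orbit tends to infinity; since by coercivity \eqref{eq11} and monotonicity the superlevel sets $\left\{H>c\right\}$ are backward invariant neighborhoods of infinity, the collapsed point is a repeller, Theorem \ref{th4} applies, and your index count finishes the proof. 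So your route is the paper's route; what is missing is precisely this backward-orbit argument, while the point you flag as delicate (recurrence on the sphere at infinity) is not needed at all.
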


\begin{proof}
 We claim that for any $p\in \mathbb R^n$, which is not finite singular point, the flow $\varphi(t, p)$ tends to
some finite singular point  when $t\rightarrow +\infty$ and  tends to the infinity when $t\rightarrow -\infty$.
Since $H\left(\mathbf{x}\right)\geq 0$ and
	$$\frac{dH\left(\varphi\left(t,p\right)\right)}{dt}=-\parallel\mathscr{Y}\left(\varphi\left(t,p\right)\right)\parallel_2^2<0,$$
the limits
$$ \lim_{t\rightarrow +\infty}H\left(\varphi\left(t,p\right)\right)\triangleq \alpha, \quad \lim_{t\rightarrow -\infty}H\left(\varphi\left(t,p\right)\right)\triangleq \beta,$$
exist, where $\alpha \in \mathbb  R$ and $\beta$ maybe $+\infty$ $	\left(\beta>H(p)\right)$.

By Proposition \ref{pr2}, the flow $\varphi\left(t,p\right)$  tends to some singular point of $\mathscr{Y}$  or the finite as $t\rightarrow+\infty$.
Since  $\lim_{\parallel \mathbf{x}\parallel\rightarrow+\infty}\parallel F\left(\mathbf{x}\right)\parallel=+\infty$ and $\alpha \in \mathbb  R$,
 the flow $\varphi\left(t,p\right)$  must tend to some singular point of $\mathscr{Y}$  or the finite as $t\rightarrow+\infty$. 

When $t\rightarrow-\infty$,  the flow $\varphi\left(t,p\right)$  must tend to the infinity, else it must tend to some singular point of $\mathscr{Y}$,
denoted by $q$. By Lemma \ref{le2}, $F\left(q\right)=\mathbf{0}$ and $H(q)=0$, which is contradiction to the fact  $\beta=H(q)=0>H(p)\geq 0$. The claim is proved. This means that the infinity of $\mathscr{Y}$  is repeller.

  Let $q_1,\ldots, q_l$ be the all finite  singular points of $\mathscr{Y}$. Using Lemma \ref{le2} and Theorem \ref{th4}, we have
  $$\sum_{j=1}^li_{\mathscr{Y}}\left(q_j\right)=l\left(-1\right)^n=\left(-1\right)^n,$$
  that is, $l=1$.  This proposition holds.
\end{proof}

\begin{proof}[{\bf Proof of Theorem \ref{th3}}]
\emph {Sufficiency.} We will prove the
sufficiency by contradiction.  Suppose that there exist two points $\mathbf{a},\mathbf{b}\in\mathbb{R}^n$ with $\mathbf{a}\neq\mathbf{b}$  such that $F\left(\mathbf{a}\right)=F\left(\mathbf{b}\right)=\mathbf{c}$. We construct a new polynomial map as follows
\begin{align}\label{eq6}
&\overline{F}\left(\mathbf{z}\right)=F\left(\mathbf{z}+\mathbf{b}\right)-\mathbf{c}=\left(f_1\left(\mathbf{z}+\mathbf{b}\right),\ldots,f_n\left(\mathbf{z}+\mathbf{b}\right)\right)-\mathbf{c}.
\end{align}
It is easy to prove that $\overline{F}\left(\mathbf{0}\right)=\overline{F}\left(\mathbf{a-b}\right)=\mathbf{0}$, $\det D\overline{F}\left(\mathbf{z}\right)\neq0$ for all $\mathbf{z}\in\mathbb{R}^n$ and $\parallel F\left(\mathbf{z}\right)\parallel=+\infty$ as $\parallel \mathbf{z}\parallel\rightarrow+\infty$. Let $\overline{H}\left(\mathbf{z}\right)=\parallel \overline{F}\left(\mathbf{z}\right)\parallel_2^2/2$.
Consider the following  gradient vector field
\begin{align}\label{eq12}
&\overline{\mathscr{Y}}=\left(-\partial_{z_1}\overline{H},\cdots,-\partial_{z_n}\overline{H}\right).
\end{align}
By Lemma \ref{le2}, the points $\mathbf{0}$ and $\mathbf{a}-\mathbf{b}\neq\mathbf{0}$ are singular points of $\overline{\mathscr{Y}}$. This contradicts Proposition \ref{pr1}.  So, $F$ is injective.

\emph {Necessity.} \;Let closed ball
$$\mathcal{B}_h\left(\mathbf{0}\right)=\left\{\mathbf{x}\in\mathbb{R}^n\big|\parallel\mathbf{x}\parallel_2^2\leq 2h,\;0\leq h\right\}$$
and hypersphere $\mathbb{S}_h^{n-1}=\partial\mathcal{B}_h\left(\mathbf{0}\right)$.  Then,
$$\mathbb{R}^n=\bigcup\limits_{h\geq0}\mathbb{S}_h^{n-1}.$$

Let $H=\parallel F\left(\mathbf{x}\right)\parallel_2^2/2$.
Since
$$F: \mathbb{R}^n\rightarrow \mathbb{R}^n=\bigcup\limits_{h\geq0}\mathbb{S}_h^{n-1}$$
is injective, we have that  $H^{-1}\left\{h\right\}=F^{-1}\left(\mathbb{S}_h^{n-1}\right)$ is a closed
hypersurface $S_h$. For every $M>0$, one can construct an open region $F^{-1}\left(\mathbb{R}^n\setminus \mathcal{B}_M\left(\mathbf{0}\right)\right)$ such that $H\left(\mathbf{x}\right)>M$ for all $\mathbf{x}\in F^{-1}\left(\mathbb{R}^n\setminus \mathcal{B}_M\left(\mathbf{0}\right)\right)$. Let
$$\delta>\max\limits_{\mathbf{x}\in F^{-1}\left(B_M\left(\mathbf{0}\right)\right)}\parallel \mathbf{x}\parallel_2^2$$
For every $M>0$, there exists a $\delta>0$ such that if $\mathbf{x}\in\mathbb{R}^n\setminus \mathcal{B}_\delta\left(\mathbf{0}\right)$, then $H\left(\mathbf{x}\right)>M$. Thus,
$H\left(\mathbf{x}\right)\rightarrow+\infty$ as $\parallel\mathbf{x}\parallel_2\rightarrow+\infty$.

This completes the proof of Theorem \ref{th3}.
\end{proof}

\section{Algebraic conditions}\label{se2}
This section is devoted to prove Theorem \ref{th10} and Theorem \ref{th2}.
\begin{lemma}\label{le-6}{\rm (see \cite{tian2021necessary})}
	Let $\mathbf{s}=\left(s_1,\ldots,s_n\right)\in\mathbb{N}_+^n$, $\mathbf{x}=\left(x_1,\ldots,x_n\right)\in \mathbb{R}^n$ and $\mathbf{y}=\left(y_1,\ldots,y_n\right)\in\mathbb{R}^n$. If $\parallel\mathbf{y}\parallel_2=1$ and
	\begin{align}\label{eq18}
		&x_1=\frac{y_1}{\rho^{s_1}},\ldots,x_n=\frac{y_n}{\rho^{s_n}},
	\end{align}
	then $\parallel \mathbf{x}\parallel_2\rightarrow+\infty$ if and only if $\rho\rightarrow0$.
\end{lemma}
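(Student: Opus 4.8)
The plan is to reduce the whole statement to the explicit identity
\begin{align*}
\parallel\mathbf{x}\parallel_2^2=\sum_{i=1}^n\frac{y_i^2}{\rho^{2s_i}},
\end{align*}
which follows at once from \eqref{eq18}, and then to squeeze this sum between pure powers of $\rho$. Writing $s_{\min}=\min\{s_1,\ldots,s_n\}$ and $s_{\max}=\max\{s_1,\ldots,s_n\}$ and using the normalization $\parallel\mathbf{y}\parallel_2^2=\sum_i y_i^2=1$, the lemma comes down to two one-sided estimates: a lower bound valid for small $\rho$ (for the forward implication) and an upper bound valid for $\rho$ bounded away from $0$ (for the converse). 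It is precisely the constraint $\parallel\mathbf{y}\parallel_2=1$ that makes these bounds uniform in $\mathbf{y}$, which is what one needs in the quasi-homogeneous compactification where $\mathbf{y}$ sweeps out the whole unit sphere.

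For the direction $\rho\to0\Rightarrow\parallel\mathbf{x}\parallel_2\to+\infty$, I would first restrict to $0<\rho\leq1$. There $\rho^{2s_i}$ is nonincreasing in $s_i$, so $\rho^{-2s_i}\geq\rho^{-2s_{\min}}$ for every $i$; multiplying by $y_i^2\geq0$ and summing gives
\begin{align*}
\parallel\mathbf{x}\parallel_2^2\geq\rho^{-2s_{\min}}\sum_{i=1}^n y_i^2=\rho^{-2s_{\min}}.
\end{align*}
Since $s_{\min}\geq1$, the right-hand side tends to $+\infty$ as $\rho\to0^+$, and the conclusion follows by comparison.

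For the converse $\parallel\mathbf{x}\parallel_2\to+\infty\Rightarrow\rho\to0$, I would argue by contraposition: if $\rho$ does not tend to $0$, then along some subsequence $\rho\geq\varepsilon$ for a fixed $\varepsilon>0$. For such $\rho$, each $s_i\geq1$ makes $\rho\mapsto\rho^{-2s_i}$ decreasing, so $\rho^{-2s_i}\leq\varepsilon^{-2s_i}\leq\max_{1\leq j\leq n}\varepsilon^{-2s_j}$, whence
\begin{align*}
\parallel\mathbf{x}\parallel_2^2=\sum_{i=1}^n y_i^2\,\rho^{-2s_i}\leq\Big(\max_{1\leq j\leq n}\varepsilon^{-2s_j}\Big)\sum_{i=1}^n y_i^2=\max_{1\leq j\leq n}\varepsilon^{-2s_j},
\end{align*}
a bound independent of both $\mathbf{y}$ and $\rho$. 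Thus $\parallel\mathbf{x}\parallel_2$ stays bounded along that subsequence, contradicting $\parallel\mathbf{x}\parallel_2\to+\infty$.

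The computations are elementary, so there is no genuine obstacle; the only point needing care is that the monotonicity of $\rho\mapsto\rho^{-2s_i}$ reverses at $\rho=1$, so the estimate used for small $\rho$ differs from the one used for large $\rho$. In particular the converse must accommodate the possibility $\rho\to+\infty$ (which would force $\parallel\mathbf{x}\parallel_2\to0$), and this is exactly why I route that direction through the uniform upper bound $\max_j\varepsilon^{-2s_j}$ rather than attempting to invert a single inequality.
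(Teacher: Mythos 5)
Your proof is correct. There is nothing internal to compare it against: the paper does not prove Lemma \ref{le-6} at all, but simply defers to Lemma 7 of \cite{tian2021necessary}, so your argument supplies a self-contained proof where the paper only cites one. The route you take --- the identity $\parallel\mathbf{x}\parallel_2^2=\sum_{i=1}^n y_i^2\rho^{-2s_i}$ together with the lower bound $\parallel\mathbf{x}\parallel_2^2\geq\rho^{-2s_{\min}}$ for $0<\rho\leq1$ and the uniform upper bound $\parallel\mathbf{x}\parallel_2^2\leq\max_{1\leq j\leq n}\varepsilon^{-2s_j}$ when $\rho\geq\varepsilon$ --- is the natural elementary argument, and the two bounds are uniform in $\mathbf{y}\in\mathbb{S}^{n-1}$, which is exactly what the application requires. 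One small point of alignment with the paper: where the lemma is invoked (in the proof of Lemma \ref{le7}, see the inequality \eqref{eq20}), the parameter is allowed to satisfy $0<|\rho|<\tilde{\delta}$ with either sign, so your estimates should be phrased in terms of $|\rho|$; since only the even powers $\rho^{-2s_i}=|\rho|^{-2s_i}$ enter the norm identity, every inequality you wrote carries over verbatim under this replacement, and no substantive change is needed.
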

\noindent The proof of Lemma \ref{le-6} is given in Lemma 7 of \cite{tian2021necessary}.

\begin{lemma}\label{le4}
	Assume that the  polynomial $\mathcal{P}\left(\mathbf{x}\right)$ is  non-negative for all $\mathbf{x}\in\mathbb{R}^n$. If $\partial_{\mathbf{x}}\mathcal{P}\left(\mathbf{x}\right)$ only vanishes at $\mathbf{x}=\mathbf{0}$ in $\mathbb{R}^n$, then $\mathbf{x}=\mathbf{0}$ is also the unique real zero of $\mathcal{P}\left(\mathbf{x}\right)$.
\end{lemma}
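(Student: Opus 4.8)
The plan is to reduce the statement to a single elementary fact: for a \emph{non-negative} differentiable function, any point where the value equals $0$ is a global minimizer, and the gradient vanishes at every interior local extremum. First I would take an arbitrary real zero $\mathbf{x}_0$ of $\mathcal{P}$, i.e.\ a point with $\mathcal{P}(\mathbf{x}_0)=0$, and note that the hypothesis $\mathcal{P}\geq 0$ gives $\mathcal{P}(\mathbf{x})\geq 0=\mathcal{P}(\mathbf{x}_0)$ for all $\mathbf{x}\in\mathbb{R}^n$. Thus $\mathbf{x}_0$ is a global, hence local, minimum of the smooth function $\mathcal{P}$, and the interior-extremum condition forces $\partial_{\mathbf{x}}\mathcal{P}(\mathbf{x}_0)=\mathbf{0}$.

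With this reduction in hand the rest is immediate. Since by assumption $\partial_{\mathbf{x}}\mathcal{P}$ vanishes only at the origin, the identity $\partial_{\mathbf{x}}\mathcal{P}(\mathbf{x}_0)=\mathbf{0}$ forces $\mathbf{x}_0=\mathbf{0}$. As $\mathbf{x}_0$ was an arbitrary real zero, the zero set of $\mathcal{P}$ is contained in $\{\mathbf{0}\}$; equivalently $\mathcal{P}(\mathbf{x})>0$ for every $\mathbf{x}\neq\mathbf{0}$. This is precisely the uniqueness half of the assertion, and it uses nothing beyond non-negativity and the gradient hypothesis.

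The step I expect to carry the only real subtlety is confirming that the origin is genuinely a zero, and not merely the unique candidate. Non-negativity together with a single critical point forces $\mathbf{0}$ to minimize $\mathcal{P}$, but a priori the minimum value need not be $0$: the polynomial $x_1^2+\cdots+x_n^2+1$ satisfies the gradient hypothesis and yet never vanishes. In every application the lemma is used with $\mathcal{P}=H_{\mathbf{s}}$, a quasi-homogeneous polynomial of positive weighted degree, for which $H_{\mathbf{s}}(\mathbf{0})=0$ follows by setting $\mathbf{x}=\mathbf{0}$ in the scaling relation $H_{\mathbf{s}}(\lambda^{\mathbf{s}}\mathbf{x})=\lambda^{\ell}H_{\mathbf{s}}(\mathbf{x})$. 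I would therefore either record $\mathcal{P}(\mathbf{0})=0$ as a harmless, always-satisfied hypothesis, or phrase the conclusion as ``$\mathbf{x}=\mathbf{0}$ is the only possible real zero of $\mathcal{P}$'', after which the extremum argument above closes everything.
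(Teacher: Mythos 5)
Your proof is correct and is essentially the paper's own argument: any real zero of the non-negative polynomial $\mathcal{P}$ is a global (hence local) minimizer, so the gradient vanishes there, forcing that zero to be the origin by hypothesis -- the paper phrases this as a contradiction, you phrase it directly, but the content is identical. Your further observation that, read literally, the lemma only gives containment of the zero set in $\left\{\mathbf{0}\right\}$ (as the example $x_1^2+\cdots+x_n^2+1$ shows, $\mathbf{0}$ need not actually be a zero) is a fair point the paper's proof also does not address, but it is harmless in every application, where $\mathcal{P}$ is a quasi-homogeneous (or higher-degree) term of positive weighted degree and therefore satisfies $\mathcal{P}\left(\mathbf{0}\right)=0$.
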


\begin{proof}
Suppose that there exists a points $\mathbf{a}\in\mathbb{R}^n$ with $\mathbf{a}\neq\mathbf{0}$  such that $\mathcal{P}\left(\mathbf{a}\right)=0$. Since $\mathcal{P}\left(\mathbf{x}\right)\geq0$ for all  $\mathbf{x}\in\mathbb{R}^n$, $\mathcal{P}\left(\mathbf{x}\right)$ attains its global minimum at a point $\mathbf{a}$. Thus, $\partial_{\mathbf{x}}\mathcal{P}|_{\mathbf{x}=\mathbf{a}}=\mathbf{0}$ with $\mathbf{a}\neq\mathbf{0}$, a contradiction. The lemma is confirmed.
\end{proof}
\begin{remark}\label{re1}
 \emph{The converse of Lemma  \ref{le4} is not true, for example $\mathcal{P}\left(x\right)=x^2\left((9x+10)^2+8\right)\geq0$ for all $x\in \mathbb{R}$, but $\partial_x\mathcal{P}\left(x\right)=108x \left(x+1\right) \left(3x+2\right)$.  This converse is correct for quasi-homogeneous polynomial as follows. }
\end{remark}

\begin{corollary}\label{co2}
	Assume that the quasi-homogeneous polynomial $\mathcal{P}\left(\mathbf{x}\right)$ is  non-negative for all $\mathbf{x}\in\mathbb{R}^n$.  Then $\partial_{\mathbf{x}}\mathcal{P}\left(\mathbf{x}\right)$ only vanishes at $\mathbf{x}=\mathbf{0}$ in $\mathbb{R}^n$ if and only if $\mathbf{x}=\mathbf{0}$ is the unique real zero of $\mathcal{P}\left(\mathbf{x}\right)$.
\end{corollary}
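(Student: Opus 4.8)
The plan is to prove the two implications separately, exploiting that one direction is already established. The forward implication --- if $\partial_{\mathbf{x}}\mathcal{P}$ vanishes only at the origin, then $\mathbf{x}=\mathbf{0}$ is the unique real zero of $\mathcal{P}$ --- is precisely Lemma \ref{le4}, whose hypothesis (non-negativity of $\mathcal{P}$) is available here; I would simply invoke it. Hence all the genuinely new work lies in the converse, and this is exactly where the quasi-homogeneity assumption must be used, in agreement with the counterexample of Remark \ref{re1}, where the converse fails for a general polynomial.

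For the converse, the central tool is the Euler-type identity for quasi-homogeneous polynomials. Starting from the defining relation $\mathcal{P}\left(\lambda^{\mathbf{s}}\mathbf{x}\right)=\lambda^{\ell}\mathcal{P}\left(\mathbf{x}\right)$ valid for all $\lambda>0$, where $\ell\in\mathbb{N}_+$ is the weighted degree and $\mathbf{s}=\left(s_1,\ldots,s_n\right)\in\mathbb{N}_+^n$, I would differentiate both sides with respect to $\lambda$ and then set $\lambda=1$. The chain rule applied to the left-hand side produces $\sum_{i=1}^n s_i x_i\,\partial_{x_i}\mathcal{P}\left(\mathbf{x}\right)$, while the right-hand side gives $\ell\,\mathcal{P}\left(\mathbf{x}\right)$, so that
\[
\sum_{i=1}^n s_i x_i\,\partial_{x_i}\mathcal{P}\left(\mathbf{x}\right)=\ell\,\mathcal{P}\left(\mathbf{x}\right).
\]
With this identity the converse becomes immediate. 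Assume $\mathbf{x}=\mathbf{0}$ is the unique real zero of $\mathcal{P}$. First, since $\mathcal{P}$ is non-negative and quasi-homogeneous of weighted degree $\ell\geq1$ it has no constant term, so $\mathcal{P}\left(\mathbf{0}\right)=0$ is a global minimum and therefore $\partial_{\mathbf{x}}\mathcal{P}\left(\mathbf{0}\right)=\mathbf{0}$; thus the origin is indeed a zero of the gradient. It remains to rule out any other. Suppose, toward a contradiction, that $\partial_{\mathbf{x}}\mathcal{P}\left(\mathbf{a}\right)=\mathbf{0}$ for some $\mathbf{a}\neq\mathbf{0}$. Evaluating the Euler identity at $\mathbf{a}$ forces $\ell\,\mathcal{P}\left(\mathbf{a}\right)=0$, and since $\ell\geq1$ we obtain $\mathcal{P}\left(\mathbf{a}\right)=0$ with $\mathbf{a}\neq\mathbf{0}$, contradicting the assumed uniqueness of the zero at the origin. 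Hence $\partial_{\mathbf{x}}\mathcal{P}$ vanishes only at $\mathbf{x}=\mathbf{0}$.

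I do not anticipate any serious obstacle: once the Euler identity is recorded, each direction reduces to a line of argument. The only point demanding care is the cancellation of $\ell$, which is legitimate precisely because $\ell\in\mathbb{N}_+$ is strictly positive by the definition of quasi-homogeneity. This is exactly the feature absent in the polynomial $\mathcal{P}\left(x\right)=x^2\left(\left(9x+10\right)^2+8\right)$ of Remark \ref{re1}, explaining why the converse holds in the quasi-homogeneous setting but not in general.
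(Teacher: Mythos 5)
Your proposal is correct and follows essentially the same route as the paper: one direction is delegated to Lemma \ref{le4}, and the converse is settled by the generalized Euler identity $\sum_{i=1}^n s_i x_i\,\partial_{x_i}\mathcal{P}=\ell\,\mathcal{P}$, which forces any nonzero critical point to be a nonzero real zero of $\mathcal{P}$, a contradiction. The only difference is cosmetic --- you derive the Euler formula and check that the gradient does vanish at the origin, steps the paper leaves implicit.
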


\begin{proof}
Let $\mathcal{P}\left(\mathbf{x}\right)$  be a quasi-homogeneous polynomial of weighted degree $d$ with respect to weight exponents $\mathbf{s}$.  By Lemma \ref{le4}, the necessity is clear.

Applying the generalized Euler formula, we obtain
$$\sum_{i=1}^ns_ix_i\partial_{x_i}\mathcal{P}\left(\mathbf{x}\right)=d\mathcal{P}\left(\mathbf{x}\right).$$
Suppose that $\partial_{\mathbf{x}}\mathcal{P}\left(\mathbf{x}\right)\big|_{\mathbf{x}=\mathbf{x}_0}=\mathbf{0}$ with $\mathbf{x}_0\neq\mathbf{0}$. Then,  $\mathcal{P}\left(\mathbf{x}_0\right)=0$ with $\mathbf{x}_0\neq\mathbf{0}$, a contradiction. The proof is finished.
\end{proof}

%
%

\begin{lemma}\label{le7}
Let $\mathcal{P}\left(\mathbf{x}\right)$  be a polynomial with $\mathbf{x}\in\mathbb{R}^n$. If there exists a weight  exponents $\mathbf{s}=\left(s_1,\ldots,s_n\right)\in\mathbb{N}_+^n$  such that
the higher $\mathbf{s}$-quasi-homogeneous term of $\mathcal{P}\left(\mathbf{x}\right)$ has  the unique zero $\mathbf x=\mathbf 0$, then
$$\lim_{\parallel\mathbf{x}\parallel\rightarrow+\infty}\big|\mathcal{P}\left(\mathbf{x}\right)\big|=+\infty.$$
\end{lemma}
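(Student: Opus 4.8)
The plan is to study $\mathcal P$ along the weighted ``polar'' directions furnished by Lemma \ref{le-6}. Write the $\mathbf s$-quasi-homogeneous decomposition $\mathcal P(\mathbf x)=\mathcal P_1(\mathbf x)+\cdots+\mathcal P_d(\mathbf x)$, where each $\mathcal P_i$ is $\mathbf s$-quasi-homogeneous of weighted degree $i$ and $\mathcal P_d$ is the higher term, assumed to vanish only at $\mathbf 0$. First I would record that $d\geq 1$: since $\mathbf s\in\mathbb N_+^n$, a term of weighted degree $0$ is necessarily a constant, and a constant cannot have $\mathbf 0$ as its unique zero, so the hypothesis forces $d\geq 1$. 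This is needed so that $\rho^{-d}\to+\infty$ at the end.

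The core computation is a change of variables. For $\mathbf x\neq\mathbf 0$ set $x_j=y_j/\rho^{s_j}$ with $\parallel\mathbf y\parallel_2=1$ and $\rho>0$; writing $\lambda=1/\rho$ so that $\mathbf x=\lambda^{\mathbf s}\mathbf y$, quasi-homogeneity yields $\mathcal P_i(\mathbf x)=\lambda^i\mathcal P_i(\mathbf y)=\rho^{-i}\mathcal P_i(\mathbf y)$, and hence
\begin{align*}
\rho^{d}\,\mathcal P(\mathbf x)=\mathcal P_d(\mathbf y)+\sum_{i=1}^{d-1}\rho^{\,d-i}\mathcal P_i(\mathbf y).
\end{align*}
Every exponent $d-i$ in the sum is a strictly positive integer. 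This substitution is a bijection of $\mathbb R^n\setminus\{\mathbf 0\}$ onto $\{\rho>0\}\times\mathbb S^{n-1}$, since $\rho\mapsto\sum_j x_j^2\rho^{2s_j}$ increases strictly from $0$ to $+\infty$; moreover Lemma \ref{le-6} identifies $\parallel\mathbf x\parallel_2\to+\infty$ with $\rho\to 0$.

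I would then finish by a compactness argument (either quantitative or by contradiction). By the hypothesis, $\mathcal P_d$ is nonzero on the compact sphere $\mathbb S^{n-1}$, so $c:=\min_{\parallel\mathbf y\parallel_2=1}|\mathcal P_d(\mathbf y)|>0$. Each $\mathcal P_i$ is continuous, hence bounded on $\mathbb S^{n-1}$, so the correction sum is $O(\rho)$ uniformly in $\mathbf y$; choosing $\rho$ small enough gives $|\rho^{d}\mathcal P(\mathbf x)|\geq c/2$, i.e. $|\mathcal P(\mathbf x)|\geq \tfrac{c}{2}\,\rho^{-d}$, which tends to $+\infty$ as $\rho\to 0$, equivalently as $\parallel\mathbf x\parallel\to+\infty$. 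Equivalently, if the conclusion failed there would be $M>0$ and a sequence $\mathbf x^{(k)}$ with $\parallel\mathbf x^{(k)}\parallel_2\to\infty$ and $|\mathcal P(\mathbf x^{(k)})|\leq M$; writing it as $(\rho_k,\mathbf y^{(k)})$ with $\rho_k\to 0$, passing to a convergent subsequence $\mathbf y^{(k)}\to\mathbf y^{*}$ on $\mathbb S^{n-1}$, and letting $k\to\infty$ in the displayed identity gives $\mathcal P_d(\mathbf y^{*})=0$ with $\mathbf y^{*}\neq\mathbf 0$, a contradiction.

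The only delicate point is the passage to the limit in the displayed identity: one must verify that the lower-degree corrections are genuinely negligible, which rests on $d-i$ being a positive integer (so $\rho^{d-i}\to 0$) together with the uniform boundedness of the $\mathcal P_i$ on the compact sphere, and on $\mathcal P_d$ being bounded away from zero there—both of which reduce to continuity on $\mathbb S^{n-1}$ combined with the hypothesis that $\mathcal P_d$ has $\mathbf 0$ as its only zero. Everything else is bookkeeping within the weighted scaling.
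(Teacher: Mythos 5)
Your proposal is correct and follows essentially the same route as the paper's proof: the same $\mathbf{s}$-quasi-homogeneous decomposition, the weighted polar substitution of Lemma \ref{le-6} giving $\rho^{d}\mathcal{P}(\mathbf{x})=\mathcal{P}_d(\mathbf{y})+\sum_{i<d}\rho^{d-i}\mathcal{P}_i(\mathbf{y})$, and a compactness bound $|\mathcal{P}_d|\geq L>0$ on the sphere to absorb the lower-order terms as $\rho\to 0$. Your additional remarks (that $d\geq 1$, and that the substitution bijects $\mathbb{R}^n\setminus\{\mathbf{0}\}$ onto $(0,+\infty)\times\mathbb{S}^{n-1}$) are harmless refinements of details the paper leaves implicit.
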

\begin{proof}
The polynomial $\mathcal{P}\left(\mathbf{x}\right)$ can be written in the sum of its $\mathbf{s}$-quasi-homogeneous terms, that is,
\begin{align}\label{eq37}
	&\mathcal{P}\left(\mathbf{x}\right)=\sum_{i=1}^{d}\mathcal{P}_i\left(\mathbf{x}\right),
\end{align}
where $\mathcal{P}_i\left(\mathbf{x}\right)$ is quasi-homogeneous of weighted degree $i$ with respect to weight exponents $\mathbf{s}$. Since the quasi-homogeneous polynomial $\mathcal{P}_d\left(\mathbf{x}\right)$ has only the zero $\mathbf x=\mathbf 0$, there exists a $\delta>0$ such that  $\mathcal{P}_d\left(\mathbf{y}\right)\neq0$ for all
$$\mathbf{y}\in\mathbb{S}_\delta^{n-1}=\left\{\mathbf{y}\big|\parallel\mathbf{y}\parallel_2=\delta,\;\mathbf{y}=\left(y_1,\ldots,y_n\right)\right\}.$$
Since $\mathcal{P}_{d}\left(\mathbf{y}\right)$ is continuous on $\mathbf{y}\in\mathbb{S}_\delta^{n-1}$,
there exists a $L>0$ such that $\big|\mathcal{P}_{d}\left(\mathbf{y}\right)\big|\geq L$ for all $\mathbf{y}\in\mathbb{S}_\delta^{n-1}$.

Performing the change of coordinates \eqref{eq18}, we have
\begin{align}\label{eq19}
	&\lim_{\parallel \mathbf{x}\parallel_2\rightarrow+\infty}\dfrac{1}{\big|\mathcal{P}\left(\mathbf{x}\right)\big|}=\lim_{\rho\rightarrow0}\dfrac{\rho^{d}}{\bigg|\sum\limits_{i=1}^{d}\rho ^{d-i} \mathcal{P}_i\left(\mathbf{y}\right)\bigg|}
\end{align}
with $\mathbf{y}\in\mathbb{S}_\delta^{n-1}$. Since
$$\lim_{\rho\rightarrow0}\bigg|\sum_{i=1}^{d}\rho ^{d-i} \mathcal{P}_i\left(\mathbf{y}\right)\bigg|=\big|\mathcal{P}_{d}\left(\mathbf{y}\right)\big|,$$	
there exists a $\tilde{\delta}>0$ such that
\begin{align}\label{eq20}
	&\frac{L}{2}\leq\frac{1}{2}\big|\mathcal{P}_{d}\left(\mathbf{y}\right)\big|<\bigg|\sum_{i=1}^{d}\rho ^{d-i} \mathcal{P}_i\left(\mathbf{y}\right)\bigg|
\end{align}
for all $0<|\rho|<\tilde{\delta}$ and $\mathbf{y}\in\mathbb{S}_{\delta}^{n-1}$. So,
$$0\leq\lim_{\parallel\mathbf{x}\parallel_2\rightarrow+\infty}\dfrac{1}{\big|\mathcal{P}\left(\mathbf{x}\right)\big|}\leq\lim_{\rho\rightarrow0}\dfrac{2\rho^d}{\big|\mathcal{P}_{d}\left(\mathbf{y}\right)\big|}\leq\lim_{\rho\rightarrow0}\dfrac{2\rho^d}{L}=0,\quad\text{that is},\quad \lim_{\parallel\mathbf{x}\parallel\rightarrow+\infty}\big|\mathcal{P}\left(\mathbf{x}\right)\big|=+\infty.$$
This ends the proof.

\end{proof}

\begin{proof}[{\bf Proof of Theorem \ref{th10}}]
Let
\begin{align}\label{eq17}
&H\left(\mathbf{x}\right)=\dfrac{\parallel F\left(\mathbf{x}\right)\parallel_2^2}{2}=\sum_{i=2}^{2d}H_i\left(\mathbf{x}\right)\geq0,
\end{align}
where $H_i\left(\mathbf{x}\right)$ is quasi-homogeneous polynomial of weighted degree $i$ with respect to weight exponents $\mathbf{s}$.
Clearly, $H_\mathbf{s}\left(\mathbf{x}\right)=H_{2d}\left(\mathbf{x}\right)$ and $H_{2d}\left(\mathbf{x}\right)\geq0$ for all $\mathbf{x}\in \mathbb{R}^n$.  Since $\partial_{\mathbf{x}}H_\mathbf{s}\left(\mathbf{x}\right)=\partial_{\mathbf{x}}H_{2d}\left(\mathbf{x}\right)$ only vanishes at the origin in $\mathbb{R}^n$, by Lemma \ref{le4}, $\mathbf{x}=\mathbf{0}$ is the unique real zero  of $H_{2d}\left(\mathbf{x}\right)$. Using Lemma \ref{le7}, we have
$$\lim_{\parallel\mathbf{x}\parallel_2\rightarrow+\infty}H\left(\mathbf{x}\right)=+\infty.$$
From Theorem \ref{th3}, it follows that $F$ is injective.
\end{proof}

To prove Theorem \ref{th2}, we introduce  the following notations.

Using the multi-index notations, the polynomial $\mathcal{P}\left(\mathbf{x}\right)\in\mathbb{R}\left[\mathbf{x}\right]$ can be written as
\begin{align}\label{eq50}
&\mathcal{P}\left(\mathbf{x}\right)=\sum_{\mathbf{k}\in\mathbb{N}^n}c_{\mathbf{k}}\mathbf{x}^{\mathbf{k}},
\end{align}
where $\mathbf{k}=\left(k_1,\ldots,k_n\right)\in\mathbb{N}^n$, $\mathbf{x}^{\mathbf{k}}=x_1^{k_1}\cdots x_n^{k_n}$ and $c_\mathbf{k}\in\mathbb{R}$. The degree of monomial $\mathbf{x}^{\mathbf{k}}=x_1^{k_1}\cdots x_n^{k_n}$ is
$$\big|\mathbf{k}\big|= \sum_{i=1}^nk_i.$$

Let $\lessdot,\gtrdot$ be the inner product of two vectors. If the polynomial $\mathcal{P}\left(\mathbf{x}\right)\in\mathbb{R}\left[\mathbf{x}\right]$ is quasi-homogeneous of weighted degree $\ell$ with respect to weight exponents $\mathbf{s}=\left(s_1,\ldots,s_n\right)\in\mathbb{N}_+^n$, then
\begin{align}\label{eq51}
	&\mathcal{P}\left(\mathbf{x}\right)=\sum_{\mathbf{k}\in\mathbb{N}^n\;\text{and}\;<\mathbf{s},\mathbf{k}>=\ell}c_{\mathbf{k}}\mathbf{x}^{\mathbf{k}}.
\end{align}
As we know, the Euclidean space  $\mathbb{R}^n$ can be represented in Cartesian product
$\mathbb{R}^n=\mathbb{R}^{n_1}\times\cdots\times\mathbb{R}^{n_r}$ with $\sum_{i=1}^rn_i=n$. Similarly, $\mathbb{N}^n=\mathbb{N}^{n_1}\times\cdots\times\mathbb{N}^{n_r}$ and $\mathbb{N}_+^n=\mathbb{N}_+^{n_1}\times\cdots\times\mathbb{N}_+^{n_r}$  with $\sum_{i=1}^rn_i=n$. The vectors $\mathbf{x}\in \mathbb{R}^n$, $\mathbf{k}\in\mathbb{N}^n$ and $\mathbf{s}\in\mathbb{N}_+^n$ can be written as
\begin{align*}
&\mathbf{x}=\left(\mathbf{x}_1,\ldots,\mathbf{x}_r\right)\in\mathbb{R}^{n_1}\times\cdots\times\mathbb{R}^{n_r},\;\mathbf{k}=\left(\mathbf{k}_1,\ldots,\mathbf{k}_r\right)\in\mathbb{N}^{n_1}\times\cdots\times\mathbb{N}^{n_r}
\end{align*}
and     $$\mathbf{s}=\left(\mathbf{s}_1,\ldots,\mathbf{s}_r\right)\in\mathbb{N}_+^{n_1}\times\cdots\times\mathbb{N}_+^{n_r},$$
respectively, where $\mathbf{x}_i\in\mathbb{R}^{n_i}$, $\mathbf{k}_i\in\mathbb{N}^{n_i}$, $\mathbf{s}_i\in\mathbb{N}_+^{n_i}$ and $\sum_{i=1}^rn_i=n$. Equation \eqref{eq50} also can be rewritten as
\begin{align*}
&\mathcal{P}\left(\mathbf{x}\right)=\mathcal{P}\left(\mathbf{x}_1,\ldots,\mathbf{x}_r\right)=\sum_{\left(\mathbf{k}_1,\ldots,\mathbf{k}_r\right)\in\mathbb{N}^{n_1}\times\cdots\times\mathbb{N}^{n_r}}c_{\left(\mathbf{k}_1,\ldots,\mathbf{k}_r\right)}\mathbf{x}_1^{\mathbf{k}_1}\cdots\mathbf{x}_r^{\mathbf{k}_r}
\end{align*}
with $c_{\left(\mathbf{k}_1,\ldots,\mathbf{k}_r\right)}\in\mathbb{R}$. Recall that $\partial_{\mathbf{x}}\mathcal{P}\left(\mathbf{x}\right)=\left(\partial_{x_1}\mathcal{P}\left(\mathbf{x}\right),\ldots,\partial_{x_n}\mathcal{P}\left(\mathbf{x}\right)\right)$.  We can also write $\partial_{\mathbf{x}}\mathcal{P}\left(\mathbf{x}\right)=\left(\partial_{\mathbf{x}_1}\mathcal{P}\left(\mathbf{x}\right),\ldots,\partial_{\mathbf{x}_r}\mathcal{P}\left(\mathbf{x}\right)\right)$ with $\mathbf{x}=\left(\mathbf{x}_1,\dots,\mathbf{x}_r\right)\in\mathbb{R}^n$.

Correspondingly, the  quasi-homogeneous  polynomial \eqref{eq51} can be expressed as
\begin{align*}
&\mathcal{P}\left(\mathbf{x}\right)=\sum\limits_{<\mathbf{s}_1,\mathbf{k}_1>+\cdots+<\mathbf{s}_r,\mathbf{k}_r>=\ell}c_{\left(\mathbf{k}_1,\ldots,\mathbf{k}_r\right)}\mathbf{x}_1^{\mathbf{k}_1}\cdots\mathbf{x}_r^{\mathbf{k}_r},
\end{align*}
where $\left(\mathbf{k}_1,\ldots,\mathbf{k}_r\right)\in\mathbb{N}^n=\mathbb{N}^{n_1}\times\cdots\times\mathbb{N}^{n_r}$, $\left(\mathbf{s}_1,\ldots,\mathbf{s}_r\right)\in\mathbb{N}_+^n=\mathbb{N}_+^{n_1}\times\cdots\times\mathbb{N}_+^{n_r}$ and $c_{\left(\mathbf{k}_1,\ldots,\mathbf{k}_r\right)}\in\mathbb{R}$.

\begin{proof}[{\bf Proof of Theorem \ref{th2}}]
Let
\begin{align}\label{eq26}
&H\left(\mathbf{x}\right)=\dfrac{\parallel F\left(\mathbf{x}\right)\parallel_2^2}{2}=\dfrac{f_1^2+\cdots+f_n^2}{2}=\sum_{j=2}^{2d}H_j\left(\mathbf{x}\right),
\end{align}
where $H_j\left(\mathbf{x}\right)$ is quasi-homogeneous of weighted degree $j$ with respect to weight exponents $\mathbf{s}$.

Let $\mathscr{Y}_\mathbf{s}$ be the higher $\mathbf{s}$-quasi-homogeneous part of vector field $\mathscr{Y}$. Then, there exist the $\mathbf{s}$-quasi-homogeneous terms of $H\left(\mathbf{x}\right)$, that is,  $H_{i_1}\left(\mathbf{x}\right),\ldots,H_{i_n}\left(\mathbf{x}\right)$ such that
\begin{align}\label{eq25}
&\mathscr{Y}_\mathbf{s}=-\left(\partial_{x_1}H_{i_1},\partial_{x_2}H_{i_2},\cdots,\partial_{x_n}H_{i_n}\right).
\end{align}
Without loss of generality, we can suppose that $2d=i_1 \geq i_2 \geq \ldots \geq i_n$. Obviously one can find positive integers $r$ and $n_1, n_2, \ldots, n_r$ such that $\sum_{i=1}^rn_i=n$ and
$$i_1=i_2=\cdots=i_{n_1}>i_{n_1+1}=\cdots=i_{n_1+n_2}>\cdots>i_{\sum_{i=1}^{r-1}n_{i}+1}=\cdots=i_{\sum_{i=1}^{r}n_{i}}.$$

Denote by $\mathbf{x}_1=(x_{1}, x_{2}, \ldots, x_{n_1}), m_1=i_1$, $\mathbf{x}_2=(x_{n_1+1}, x_{n_1+2}, \ldots, x_{n_1+n_2}), m_2=i_{n_1+1}, \ldots,$ 
then   $\mathbf{x}=\left(\mathbf{x}_1,\ldots,\mathbf{x}_r\right)\in\mathbb{R}^{n_1}\times\cdots\times\mathbb{R}^{n_r}$, and equation \eqref{eq25}
can be written as
$$\mathscr{Y}_\mathbf{s}=-\left(\partial_{\mathbf{x}_1}H_{m_1},\partial_{\mathbf{x}_2}H_{m_2},\cdots,\partial_{\mathbf{x}_r}H_{m_r}\right).$$

 For convenience,  we denote
$$\overline{\mathscr{Y}_\mathbf{s}}:=-\mathscr{Y}_\mathbf{s}=\left(\partial_{\mathbf{x}_1}H_{m_1},\partial_{\mathbf{x}_2}H_{m_2},\cdots,\partial_{\mathbf{x}_r}H_{m_r}\right).$$
Thereby, $2d=m_1>\cdots>m_r$,
\begin{align}\label{eq35}
&\mathbf{x}=\left(\overset{\mathbf{x}_1}{\overbrace{x_1,\ldots,x_{n_1}}},\overset{\mathbf{x}_2}{\overbrace{x_{n_1+1},\ldots}},\overset{\ldots}{\overbrace{\cdots}},\overset{\mathbf{x}_r}{\overbrace{\ldots,x_{n-1},x_n}}\right)\in\mathbb{R}^{n_1}\times\cdots\times\mathbb{R}^{n_r},
\end{align}
and
\begin{align}\label{eq36}
&\mathbf{s}=\left(\overset{\mathbf{s}_1}{\overbrace{s_1,\ldots,s_{n_1}}},\overset{\mathbf{s}_2}{\overbrace{s_{n_1+1},\ldots}},
\overset{\ldots}{\overbrace{\cdots}},\overset{\mathbf{s}_r}{\overbrace{\ldots,s_{n-1},s_n}}\right)\in\mathbb{N}_+^{n_1}\times\cdots\times\mathbb{N}_+^{n_r}.
\end{align}

Next, we distinguish between the cases $r=1$ and $r>1$.

{\bf Case 1: $r=1$.} For this case, equation \eqref{eq25} becomes $\mathscr{Y}_\mathbf{s}=-\partial_{\mathbf{x}}H_{2d}$. Since $\overline{\mathscr{Y}_\mathbf{s}}=\partial_{\mathbf{x}}H_{2d}$ only vanishes at the origin, by Theorem \ref{th10}, $F$ is injective.

{\bf Case 2: $r>1$.}   In this case, we have that
\begin{equation}\label{eq27}
H_j	\left(\mathbf{x}\right)=
\begin{cases}
	H_{j}\left(\mathbf{x}_1,\ldots,\mathbf{x}_i\right),\;\text{if}\;m_{i+1}< j \leq m_i\;\text{with}\;i=1,\ldots,r-1;\\
	H_j\left(\mathbf{x}_1,\ldots,\mathbf{x}_r\right),\;\text{if}\; 2\leq j\leq m_r;
\end{cases}
\end{equation}
where $H_j\left(\mathbf{x}\right)$ is quasi-homogeneous of weighted degree $j$ with respect to weight exponents $\mathbf{s}$. Then, equation \eqref{eq26} can be written as
\begin{equation}\label{eq28}
	\begin{split}
		H\left(\mathbf{x}_1,\ldots,\mathbf{x}_r\right)=&\dfrac{f_1^2+\cdots+f_n^2}{2}=\sum_{m_2< j\leq 2d}H_j\left(\mathbf{x}_1\right)+\sum_{m_3< j\leq m_2}H_j\left(\mathbf{x}_1,\mathbf{x}_2\right)+\cdots\\
		&+\sum_{m_r<j\leq m_{r-1}}H_j\left(\mathbf{x}_1,\ldots,\mathbf{x}_{r-1}\right)+\sum_{2\leq j\leq m_r}H_j\left(\mathbf{x}_1,\ldots,\mathbf{x}_r\right).
	\end{split}
\end{equation}
Note that each quasi-homogeneous  polynomial $H_{m_i}\left(\mathbf{x}\right)$ can be regard as a  polynomial  in the variable $\mathbf{x}_i$, that is,
\begin{align}\label{eq46}
H_{m_i}\left(\mathbf{x}_1,\ldots,\mathbf{x}_i\right)=\mathscr{H}_{m_i}\left(\mathbf{x}_i\right)+\mathcal{H}_{m_i}\left(\mathbf{x}_1,\ldots,\mathbf{x}_i\right),
\end{align}
where $\mathscr{H}_{m_i}\left(\mathbf{x}_i\right)$ is the higher-degree term of $H_{m_i}\left(\mathbf{x}\right)$ in the variable $\mathbf{x}_i$ and $i=1,\ldots,r$.

The following two claims will finish the proof of {\bf Case 2}.

{\bf Claim 1.} \emph{For $i=1,\ldots,r$, $\mathscr{H}_{m_i}\left(\mathbf{x}_i\right)\geq0$ and $\mathscr{H}_{m_i}\left(\mathbf{x}_i\right)=0$ if and only if $\mathbf{x}_i=\mathbf{0}_i\in\mathbb{R}^{n_i}$.}

The proof goes by induction on $i$. Taking  $\mathbf{x}=\left(\mathbf{0}_1,\ldots,\mathbf{0}_{r-1},\mathbf{x}_r\right)$, one can obtain that
\begin{align}\label{eq29}
&\overline{\mathscr{Y}_\mathbf{s}}\big|_{\mathbf{x}=\left(\mathbf{0}_1,\ldots,\mathbf{0}_{r-1},\mathbf{x}_r\right)}=\left(\mathbf{0}_1,\cdots,\mathbf{0}_{r-1}, \partial_{\mathbf{x}_r}\mathscr{H}_{m_r}\right)
\end{align}
and
\begin{small}
\begin{equation}\label{eq30}
	H\left(\mathbf{0}_1,\ldots,\mathbf{0}_{r-1},\mathbf{x}_r\right)=\dfrac{f_1^2+\cdots+f_n^2}{2}\Bigg|_{\mathbf{x}=\left(\mathbf{0}_1,\ldots,\mathbf{0}_{r-1},\mathbf{x}_r\right)}=\mathscr{H}_{m_r}\left(\mathbf{x}_r\right)+\sum_{2\leq j< m_r}H_j\left(\mathbf{0}_1,\ldots,\mathbf{0}_{r-1},\mathbf{x}_r\right).
\end{equation}
\end{small}Obviously, $\mathscr{H}_{m_r}\left(\mathbf{x}_r\right)\geq0$.  Since equation \eqref{eq29} only vanishes at the origin in $\mathbb{R}^n$, we have that $\partial_{\mathbf{x}_r}\mathscr{H}_{m_r}$ only vanishes at $\mathbf{x}_r=\mathbf{0}_r$.
By Lemma \ref{le4}, $\mathscr{H}_{m_r}\left(\mathbf{x}_r\right)=0$ if and only if $\mathbf{x}_r=\mathbf{0}_r\in\mathbb{R}^{n_r}$.
The claim  is confirmed for $i=r$.

We assume that the claim holds for $i=i_0,\ldots, r$.  Substituting
$$ \mathbf{x}=\left(\mathbf{0}_1,\ldots,\mathbf{0}_{i_0-2},\mathbf{x}_{i_0-1},\mathbf{x}_{i_0},\ldots,\mathbf{x}_r\right)$$
into equations \eqref{eq25} and \eqref{eq28}, we get, respectively,
\begin{small}
\begin{align}\label{eq31}
	&\overline{\mathscr{Y}_\mathbf{s}}\big|_{\mathbf{x}=\left(\mathbf{0}_1,\ldots,\mathbf{0}_{i_0-2},\mathbf{x}_{i_0-1},\mathbf{x}_{i_0},\ldots,\mathbf{x}_r\right)}=\left(\mathbf{0}_1,\cdots,\mathbf{0}_{i_0-2}, \partial_{\mathbf{x}_{i_0-1}}\mathscr{H}_{m_{i_0-1}},\partial_{\mathbf{x}_{i_0}}H_{m_{i_0}},\ldots,\partial_{\mathbf{x}_r}H_{m_r}\right)
\end{align}
\end{small}and
\begin{equation}\label{eq32}
	\begin{split}
 H\left(\mathbf{0}_1,\ldots,\mathbf{0}_{i_0-2},\mathbf{x}_{i_0-1},\mathbf{x}_{i_0},\ldots,\mathbf{x}_r\right)&=\dfrac{f_1^2+\cdots+f_n^2}{2}\Bigg|_{\mathbf{x}=\left(\mathbf{0}_1,\ldots,\mathbf{0}_{i_0-2},\mathbf{x}_{i_0-1},\mathbf{x}_{i_0},\ldots,\mathbf{x}_r\right)}\\
 &=\mathscr{H}_{m_{i_0-1}}\left(\mathbf{x}_{i_0-1}\right)+G\left(\mathbf{x}_{i_0-1},\mathbf{x}_{i_0},\ldots,\mathbf{x}_r\right),
	\end{split}
\end{equation}
where
\begin{small}
\begin{equation}\label{eq33}
	\begin{split}
	&G\left(\mathbf{x}_{i_0-1},\mathbf{x}_{i_0},\ldots,\mathbf{x}_r\right)=\sum_{m_{i_0}<j<m_{i_0-1}}H_j\left(\mathbf{0}_1,\ldots,\mathbf{0}_{i_0-2},\mathbf{x}_{i_0-1}\right)+\sum_{m_{i_0+1}<j\leq m_{i_0}}H_j\left(\mathbf{0}_1,\ldots,\mathbf{0}_{i_0-2},\mathbf{x}_{i_0-1},\mathbf{x}_{i_0}\right)+\\
&\cdots+\sum_{m_r<j\leq m_{r-1}}H_j\left(\mathbf{0}_1,\ldots,\mathbf{0}_{i_0-2},\mathbf{x}_{i_0-1},\mathbf{x}_{i_0},\ldots,\mathbf{x}_{r-1}\right)+\sum_{2\leq j\leq m_r}H_j\left(\mathbf{0}_1,\ldots,\mathbf{0}_{i_0-2},\mathbf{x}_{i_0-1},\mathbf{x}_{i_0},\ldots,\mathbf{x}_r\right).
	\end{split}
\end{equation}
\end{small}Clearly, $\mathscr{H}_{m_{i_0-1}}\left(\mathbf{x}_{i_0-1}\right)\geq0$ for $\mathbf{x}_{i_0-1}\in\mathbb{R}^{n_{i_0-1}}$.

By contradiction,  we suppose that there exists a point $\mathbf{0}_{i_0-1}\neq\mathbf{a}_{i_0-1}\in\mathbb{R}^{n_{i_0-1}}$ such that $\mathscr{H}_{m_{i_0-1}}\left(\mathbf{a}_{i_0-1}\right)=0$. From Corollary \ref{co2}, it follows  that there exists a point $\mathbf{0}_{i_0-1}\neq\mathbf{b}_{i_0-1}\in\mathbb{R}^{n_{i_0-1}}$ such that
\begin{align}\label{eq34}
&\partial_{\mathbf{x}_{i_0-1}}H_{m_{i_0-1}}\big|_{\mathbf{x}=\left(\mathbf{0}_1,\ldots,\mathbf{0}_{i_0-2},\mathbf{b}_{i_0-1},\mathbf{x}_{i_0},\ldots,\mathbf{x}_r\right)}=\partial_{\mathbf{x}_{i_0-1}}\mathscr{H}_{m_{i_0-1}}\big|_{\mathbf{x}_{i_0-1}=\mathbf{b}_{i_0-1}}=\mathbf{0}_{i_0-1}.
\end{align}

Let
$$G_{m_{i_0}}\left(\mathbf{x}_{i_0}\right)=H_{m_{i_0}}\left(\mathbf{0}_1,\ldots,\mathbf{0}_{i_0-2},\mathbf{b}_{i_0-1},\mathbf{x}_{i_0}\right).$$
Then, $G_{m_{i_0}}\left(\mathbf{x}_{i_0}\right)$ is a polynomial in the variable $\mathbf{x}_{i_0}$. It is obvious that  the higher $\mathbf{s}_{i_0}$-quasi-homogeneous term of $G_{m_{i_0}}\left(\mathbf{x}_{i_0}\right)$  is $\mathscr{H}_{m_{i_0}}\left(\mathbf{x}_{i_0}\right)$, where weight exponents $\mathbf{s}_{i_0}$ is given by equation \eqref{eq36}.  By  the induction assumption and Lemma \ref{le7}, we have
$$\lim_{\parallel\mathbf{x}_{i_0}\parallel\rightarrow+\infty}G_{m_{i_0}}\left(\mathbf{x}_{i_0}\right)=+\infty.$$
This means that $G_{m_{i_0}}\left(\mathbf{x}_{i_0}\right)$ has global minimum  in $\mathbb{R}^{n_{i_0}}$.  Therefore, there exists $\mathbf{b}_{i_0}\in\mathbb{R}^{n_{i_0}}$ such that
\begin{align}\label{eq38}
&\partial_{\mathbf{x}_{i_0}}G_{m_{i_0}}\left(\mathbf{x}_{i_0}\right)\big|_{\mathbf{x}_{i_0}=\mathbf{b}_{i_0}}=\partial_{\mathbf{x}_{i_0}}H_{m_{i_0}}\left(\mathbf{0}_1,\ldots,\mathbf{0}_{i_0-2},\mathbf{b}_{i_0-1},\mathbf{x}_{i_0}\right)\big|_{\mathbf{x}_{i_0}=\mathbf{b}_{i_0}}=\mathbf{0}_{i_0}\in\mathbb{R}^{n_{i_0}}.
\end{align}

Let
$$G_{m_{i_0+1}}\left(\mathbf{x}_{i_0+1}\right)=H_{m_{i_0+1}}\left(\mathbf{0}_1,\ldots,\mathbf{0}_{i_0-2},\mathbf{b}_{i_0-1},\mathbf{b}_{i_0},\mathbf{x}_{i_0+1}\right).$$
The higher $\mathbf{s}_{i_0+1}$-quasi-homogeneous term of $G_{m_{i_0+1}}\left(\mathbf{x}_{i_0+1}\right)$  is $\mathscr{H}_{m_{i_0+1}}\left(\mathbf{x}_{i_0+1}\right)$, where weight exponents $\mathbf{s}_{i_0+1}$ is given by equation \eqref{eq36}. Analogously,
$$\lim_{\parallel\mathbf{x}_{i_0+1}\parallel\rightarrow+\infty}G_{m_{i_0+1}}\left(\mathbf{x}_{i_0+1}\right)=+\infty.$$
The polynomial  $G_{m_{i_0+1}}\left(\mathbf{x}_{i_0+1}\right)$ has global minimum  in $\mathbb{R}^{n_{i_0+1}}$. So, there exists $\mathbf{b}_{i_0+1}\in\mathbb{R}^{n_{i_0+1}}$ such that
\begin{small}
\begin{equation}\label{eq39}
	\begin{split}
		\partial_{\mathbf{x}_{i_0+1}}G_{m_{i_0+1}}\left(\mathbf{x}_{i_0+1}\right)\big|_{\mathbf{x}_{i_0+1}=\mathbf{b}_{i_0+1}}&=\partial_{\mathbf{x}_{i_0+1}}H_{m_{i_0+1}}\left(\mathbf{0}_1,\ldots,\mathbf{0}_{i_0-2},\mathbf{b}_{i_0-1},\mathbf{b}_{i_0},\mathbf{x}_{i_0+1}\right)\big|_{\mathbf{x}_{i_0+1}=\mathbf{b}_{i_0+1}}\\
		&=\mathbf{0}_{i_0+1}\in\mathbb{R}^{n_{i_0+1}}.
	\end{split}
\end{equation}
\end{small}
Repeating the above process, for $i=i_0,\ldots,r$, we can obtain that
\begin{equation}\label{eq40}
\begin{array}{c}
\partial_{\mathbf{x}_{i_0}}H_{m_{i_0}}\left(\mathbf{0}_1,\ldots,\mathbf{0}_{i_0-2},\mathbf{b}_{i_0-1},\mathbf{x}_{i_0}\right)\big|_{\mathbf{x}_{i_0}=\mathbf{b}_{i_0}}=\mathbf{0}_{i_0}\in\mathbb{R}^{n_{i_0}},\vspace{2ex}\\
\partial_{\mathbf{x}_{i_0+1}}H_{m_{i_0+1}}\left(\mathbf{0}_1,\ldots,\mathbf{0}_{i_0-2},\mathbf{b}_{i_0-1},\mathbf{b}_{i_0},\mathbf{x}_{i_0+1}\right)\big|_{\mathbf{x}_{i_0+1}=\mathbf{b}_{i_0+1}}=\mathbf{0}_{i_0+1}\in\mathbb{R}^{n_{i_0+1}},\\
	\vdots\\
\partial_{\mathbf{x}_r}H_{m_r}\left(\mathbf{0}_1,\ldots,\mathbf{0}_{i_0-2},\mathbf{b}_{i_0-1},\mathbf{b}_{i_0},\ldots,\mathbf{b}_{r-1},\mathbf{x}_r\right)\big|_{\mathbf{x}_r=\mathbf{b}_r}=\mathbf{0}_r\in\mathbb{R}^{n_r}.\\
\end{array}
\end{equation}
Note that
$$\mathbf{b}:=\left(\mathbf{0}_1,\ldots,\mathbf{0}_{i_0-2},\mathbf{b}_{i_0-1},\mathbf{b}_{i_0},\ldots,\mathbf{b}_{r-1},\mathbf{b}_r\right)\neq\mathbf{0}$$
because $\mathbf{b}_{i_0-1}\neq \mathbf{0}_{i_0-1}$.  By equations  \eqref{eq34} and \eqref{eq40},  the higher $\mathbf{s}$-quasi-homogeneous part of vector field $\mathscr{Y}$ vanishes at point $\mathbf{b}$, that is,
\begin{align}\label{eq41}
	&\overline{\mathscr{Y}_\mathbf{s}}\big|_{\mathbf{x}=\mathbf{b}=\left(\mathbf{0}_1,\ldots,\mathbf{0}_{i_0-2},\mathbf{b}_{i_0-1},\ldots,\mathbf{b}_r\right)\neq\mathbf{0}}=\left(\mathbf{0}_1,\cdots,\mathbf{0}_{i_0-2},\mathbf{0}_{i_0-1},\ldots,\mathbf{0}_r\right).
\end{align}
This contradicts the condition of the theorem. So, $\mathscr{H}_{m_{i_0-1}}\left(\mathbf{x}_{i_0-1}\right)=0$ if and only if $\mathbf{x}_{i_0-1}=\mathbf{0}_{i_0-1}\in\mathbb{R}^{n_{i_0-1}}$.  By induction, the proof of {\bf Claim 1} is finished.

Let
$$m=\prod_{i=1}^rm_i.$$
Construct a new weight exponents $\mathbf{\tilde{s}}$  as follows:
\begin{align}\label{eq42}
&\mathbf{\tilde{s}}=\left(\frac{m}{m_1}\mathbf{s}_1,\frac{m}{m_2}\mathbf{s}_2,\ldots,\frac{m}{m_r}\mathbf{s}_r\right).
\end{align}
{\bf Claim 2.} \emph{The higher $\mathbf{\tilde{s}}$-quasi-homogeneous term of $H\left(\mathbf{x}\right)$ is
\begin{align}\label{eq43}
&\mathscr{H}\left(\mathbf{x}\right)=\sum_{i=1}^r\mathscr{H}_{m_i}\left(\mathbf{x}_i\right),
\end{align}}
where $\mathscr{H}_{m_i}\left(\mathbf{x}_i\right)$ is given in \eqref{eq46} for $i=1,\dots,r$.

From the definition of the quasi-homogeneous polynomial, equation \eqref{eq27} can be represented in the form
\begin{equation}\label{eq45}
	\begin{split}
&H_j	\left(\mathbf{x}\right)=\\
&\begin{cases}
	\sum\limits_{<\mathbf{s}_1,\mathbf{k}_1>+\cdots+<\mathbf{s}_i,\mathbf{k}_i>=j}c_{\left(\mathbf{k}_1,\ldots,\mathbf{k}_i\right)}\mathbf{x}_1^{\mathbf{k}_1}\cdots\mathbf{x}_i^{\mathbf{k}_i},\;\text{if}\;m_{i+1}< j \leq m_i\;\text{with}\;i=1,\ldots,r-1;\vspace{2ex}\\
	\sum\limits_{<\mathbf{s}_1,\mathbf{k}_1>+\cdots+<\mathbf{s}_r,\mathbf{k}_r>=j}c_{\left(\mathbf{k}_1,\ldots,\mathbf{k}_r\right)}\mathbf{x}_1^{\mathbf{k}_1}\cdots\mathbf{x}_r^{\mathbf{k}_r},\;\text{if}\; 2\leq j\leq m_r.
\end{cases}
	\end{split}
\end{equation}
Thereby,
\begin{small}
\begin{equation}\label{eq44}
	\begin{split}
		&H_j\left(\lambda^{\mathbf{\tilde{s}}}\mathbf{x}\right)=\\
		&\begin{cases}
			\sum\limits_{<\mathbf{s}_1,\mathbf{k}_1>+\cdots+<\mathbf{s}_i,\mathbf{k}_i>=j}c_{\left(\mathbf{k}_1,\ldots,\mathbf{k}_i\right)}\lambda^{m\left(<\mathbf{s}_1/m_1,\mathbf{k}_1>+\cdots+<\mathbf{s}_i/m_i,\mathbf{k}_i>\right)}\mathbf{x}_1^{\mathbf{k}_1}\cdots\mathbf{x}_i^{\mathbf{k}_i},\;\text{if}\;m_{i+1}< j \leq m_i\vspace{2ex}\\
			\text{with}\;i=1,\ldots,r-1;\vspace{2ex}\\
			\sum\limits_{<\mathbf{s}_1,\mathbf{k}_1>+\cdots+<\mathbf{s}_r,\mathbf{k}_r>=j}c_{\left(\mathbf{k}_1,\ldots,\mathbf{k}_r\right)}\lambda^{m\left(<\mathbf{s}_1/m_1,\mathbf{k}_1>+\cdots+<\mathbf{s}_r/m_r,\mathbf{k}_r>\right)}\mathbf{x}_1^{\mathbf{k}_1}\cdots\mathbf{x}_r^{\mathbf{k}_r},\;\text{if}\; 2\leq j\leq m_r.
		\end{cases}
	\end{split}
\end{equation}
\end{small}Since $m_1>m_2>\cdots>m_r$, we have $$m\left(<\dfrac{\mathbf{s}_1}{m_1},\mathbf{k}_1>+\cdots+<\dfrac{\mathbf{s}_i}{m_i},\mathbf{k}_i>\right)\leq \dfrac{m}{m_i}\left(<\mathbf{s}_1,\mathbf{k}_1>+\cdots+<\mathbf{s}_i,\mathbf{k}_i>\right)=\dfrac{jm}{m_i}.$$
From equations \eqref{eq46} and \eqref{eq44}, it follows  that  $\deg_{\mathbf{\tilde{s}}}H_{m_1}=\deg_{\mathbf{\tilde{s}}}H_{m_2}=\cdots=\deg_{\mathbf{\tilde{s}}}H_{m_r}=m$
and $\deg_{\mathbf{\tilde{s}}}H_j<m$ with $j\neq m_1,\ldots,m_r$, that is, $\deg_{\mathbf{\tilde{s}}}H=m$.

Using equations \eqref{eq46} and \eqref{eq44}, we get that
\begin{small}
\begin{equation}\label{eq47}
	\begin{split}
		&H_{m_i}\left(\lambda^{\frac{m}{m_1}\mathbf{s}_1}\mathbf{x}_1,\ldots,\lambda^{\frac{m}{m_i}\mathbf{s}_i}\mathbf{x}_i\right)\\
		&=\lambda^m\mathscr{H}_{m_i}\left(\mathbf{x}_i\right)+\sum\limits_{
			\begin{tiny}
				\begin{array}{c}
					<\mathbf{s}_1,\mathbf{k}_1>+\cdots+<\mathbf{s}_i,\mathbf{k}_i>=m_i\\
					\text{with}\;<\mathbf{s}_i,\mathbf{k}_i>\neq m_i
				\end{array}
			\end{tiny}
		}c_{\left(\mathbf{k}_1,\ldots,\mathbf{k}_i\right)}\lambda^{m\left(<\mathbf{s}_1/m_1,\mathbf{k}_1>+\cdots+<\mathbf{s}_i/m_i,\mathbf{k}_i>\right)}\mathbf{x}_1^{\mathbf{k}_1}\cdots\mathbf{x}_i^{\mathbf{k}_i}
	\end{split}
\end{equation}
\end{small}for $i=1,\ldots,r$.  Since $<\mathbf{s}_1,\mathbf{k}_1>+\cdots+<\mathbf{s}_i,\mathbf{k}_i>=m_i$, $<\mathbf{s}_i,\mathbf{k}_i>\neq m_i$ and $m_1>m_2>\cdots>m_r$,  one can obtain that
\begin{small}
$$<\dfrac{\mathbf{s}_1}{m_1},\mathbf{k}_1>+\cdots+<\dfrac{\mathbf{s}_i}{m_i},\mathbf{k}_i>\leq \dfrac{m_i-<\mathbf{s}_i,\mathbf{k}_i>}{m_{i-1}}+\dfrac{<\mathbf{s}_i,\mathbf{k}_i>}{m_i}=\dfrac{m_i}{m_{i-1}}+<\mathbf{s}_i,\mathbf{k}_i>\left(\frac{1}{m_i}-\frac{1}{m_{i-1}}\right)<1$$
\end{small}for $i=1,\ldots,r$. This means that the higher $\mathbf{\tilde{s}}$-quasi-homogeneous term of  $H_{m_i}\left(\mathbf{x}_1,\ldots,\mathbf{x}_i\right)$ is $\mathscr{H}_{m_i}\left(\mathbf{x}_i\right)$ for $i=1,\ldots,r$.
Consequently, {\bf Claim 2}  holds.

By {\bf Claim 1}, we know that $\mathbf{x}=\mathbf{0}$ is a unique real zero of the higher $\mathbf{\tilde{s}}$-quasi-homogeneous term of $H\left(\mathbf{x}\right)$. Applying Theorem \ref{th10}, $F$ is injective.

We complete the proof of Theorem \ref{th2}.
\end{proof}
\section{Proof of Theorems \ref{th13} and \ref{th11}}\label{se6}

In this section, we prove Theorem \ref{th13} and Theorem \ref{th11}.
\begin{proof}[{\bf Proof of Theorem \ref{th13}}]
	Following the proof of Theorem \ref{th2}, we take the weight exponents $\mathbf{\tilde{s}}$  as follows:
	\begin{small}
		\begin{align}\label{eq60}
			&\mathbf{\tilde{s}}=
			\begin{cases}
				\mathbf{s}=\left(s_1,\ldots,s_n\right),\;\text{for {\bf Case 1}  in proof of Theorem \ref{th2}};\\
				\left(\frac{m}{m_1}\mathbf{s}_1,\frac{m}{m_2}\mathbf{s}_2,\ldots,\frac{m}{m_r}\mathbf{s}_r\right)\;\text{(see equation \eqref{eq42})},\; \text{for {\bf Case 2}  in proof of Theorem \ref{th2}.}
			\end{cases}
		\end{align}
	\end{small}
	
	Let $H_{\mathbf{\tilde{s}}}\left(\mathbf{x}\right)$ be the higher $\mathbf{\tilde{s}}$-quasi-homogeneous term of $H\left(\mathbf{x}\right)$. By the process of proving Theorem  \ref{th2}, we know  that $\mathbf{x}=\mathbf{0}$ is a unique zero of $H_{\mathbf{\tilde{s}}}\left(\mathbf{x}\right)$ in $\mathbb{R}^n$.
	
	Let $f_i^\mathbf{\tilde{s}}$ be the higher $\mathbf{\tilde{s}}$-quasi-homogeneous term of $f_i$ for $i=1,\ldots,n$. Then, the higher $\mathbf{\tilde{s}}$-quasi-homogeneous  part of the  polynomial map $F$ is
	$F_\mathbf{\tilde{s}}\left(\bf{x}\right)=\left(f_1^\mathbf{\tilde{s}},\ldots,f_n^\mathbf{\tilde{s}}\right)$. Since
	$$H\left(\bf{x}\right)=\dfrac{\parallel F\left(\bf{x}\right)\parallel_2^2}{2}=\frac{1}{2}\sum_{i=1}^nf_i^2,$$
	one can get that
	\begin{align}\label{eq24}
		&0\leq H_\mathbf{\tilde{s}}\left(\bf{x}\right)\leq\frac{\parallel F_\mathbf{\tilde{s}}\left(\bf{x}\right)\parallel_2^2}{2}.
	\end{align}
	Suppose that $F_\mathbf{\tilde{s}}\left(\bf{a}\right)=\bf{0}$ with $\bf{a}\neq\bf{0}$. By equation \eqref{eq24},  $H_\mathbf{\tilde{s}}\left(\bf{a}\right)=0$ with $\bf{a}\neq\bf{0}$, a contradiction. So,
	$F_\mathbf{\tilde{s}}\left(\bf{x}\right)$ only vanishes at the origin in $\mathbb{R}^n$.
\end{proof}

\begin{proof}[{\bf Proof of Theorem \ref{th11}}]
	The process of proving Theorem  \ref{th2} tells us that Theorem \ref{th2}  can be derived from Theorem \ref{th10}, that is, Theorem \ref{th10}   implies Theorem \ref{th2}.  Next, we will show that Theorem \ref{th10} can be obtained from Theorem \ref{th2}.
	
	Let $H_\mathbf{s}\left(\mathbf{x}\right)$  be the higher $\mathbf{s}$-quasi-homogeneous term of
	$$H\left(\mathbf{x}\right)=\dfrac{\parallel F\left(\mathbf{x}\right)\parallel_2^2}{2}=\dfrac{f_1^2+\cdots+f_n^2}{2}$$
	with  $\mathbf{s}=\left(s_1,\ldots,s_n\right)\in\mathbb{N}_+^n$.  Suppose that $\partial_{\mathbf{x}}H_{\mathbf{s}}\left(\mathbf{x}\right)$
     only vanishes at the origin. Then  $H_\mathbf{s}\left(\mathbf{x}\right)\geq0$ for all $\mathbf{x}\in\mathbb{R}^n$, by Corollary \ref{co2}, $H_{\mathbf{s}}\left(\mathbf{x}\right)$ has the unique zero of $\mathbf{x}=\mathbf{0}$.

We claim that $\partial_{x_i}H_\mathbf{s}\left(\mathbf{x}\right)\not\equiv0$ for $i=1,\ldots,n$. Otherwise,
	$$\mathbf{x}=\left(0,\ldots,0,\overset{i\;\text{th}}{1},0\ldots,0\right)$$
	is  a real zero of $H_\mathbf{s}\left(\mathbf{x}\right)$, a contradiction. Thus, the higher $\mathbf{s}$-quasi-homogeneous part of vector field $\mathscr{Y}$ is $-\partial_{\mathbf{x}}H_\mathbf{s}\left(\mathbf{x}\right)$, which only vanishes at $\mathbf{x}=\mathbf{0}$ in $\mathbb{R}^n$.  By Theorem \ref{th2}, $F$ is injective.
	
So, Theorem \ref{th2}   implies Theorem \ref{th10}.
\end{proof}

\section*{Acknowledgments}
The first author is partially supported by the  National Natural Science Foundation of China (Grant no. 12171491). The second author is partially supported by the National Natural Science Foundation of China (Grant no. 12401205 and Grant no. 12371182), the Guangdong Basic and Applied Basic Research Foundation (Grant no. 2023A1515110430), the National Funded Postdoctoral Researcher Program (Grant no. GZC20230970) and the Fundamental Research Funds for the Central Universities (Grant no. 21624347).


\end{document}